\let\emptyset\varnothing
\newtheorem{theorem}{Theorem}[section]
\newtheorem{lemma}[theorem]{Lemma}
\newtheorem{example}[theorem]{Example}
\newtheorem{corollary}[theorem]{Corollary}
\newtheorem{remark}{Remark}[section]
\title{New Partial Geometric Difference Sets and Partial Geometric Difference Families}
\author{
Jerod Michel\thanks{Corresponding author. Email Address: contextolibre@gmail.com.}
\thanks{J. Michel is with the School of Mathematics, Zhejiang University, Hangzhou 310027, China.} \\
}
\begin{document}

\date{}\maketitle

\begin{abstract}

Olmez, in ``Symmetric $1\frac{1}{2}$-Designs and $1\frac{1}{2}$-Difference Sets'' (2014), introduced the concept of a partial geometric difference set (also referred to as a $1\frac{1}{2}$-design), and showed that partial geometric difference sets give partial geometric designs. Nowak et al., in ``Partial Geometric Difference Families'' (2014), introduced the concept of a partial difference family, and showed that these also give partial geometric designs. It was shown by Brouwer et al. in ``Directed strongly regular graphs from $1\frac{1}{2}$-designs'' (2012) that directed strongly regular graphs can be obtained from partial geometric designs. In this correspondence we construct several families of partial geometric difference sets and partial difference families with new parameters, thereby giving directed strongly regular graphs with new parameters. We also discuss some of the links between partially balanced designs, $2$-adesigns (which were recently coined by Cunsheng Ding in ``Codes from Difference Sets'' (2015)), and partial geometric designs, and make an investigation into when a $2$-adesign is partial geometric.

\medskip
\noindent {{\it Key words and phrases\/}:
partial geometric design, cyclotomic class, directed strongly regular graph, partial geometric difference set, partial geometric difference family
}\\
\smallskip

\noindent {{\it Mathematics subject classifications\/}: 05B10, 05B05, 05E30.}
\end{abstract}


\section{Introduction}\label{sec1}
In this correspondence we discuss partial geometric difference sets and partial geometric difference families, which were introduced by Olmez in \cite{O} and Nowak et al. \cite{KN}. Here it was also shown that partial geometric difference sets and partial geometric difference families give partial geometric designs. It is clear that partial geometric designs have several applications in graph theory, coding theory and cryptography \cite{BOSE}, \cite{BOS}, \cite{OLMEZ}, \cite{O2}. It was shown by Brouwer et al. in \cite{BOS} that directed strongly regular graphs can be obtained from partial geometric designs. In \cite{O2} Olmez showed that certain partial geometric difference sets can be used to construct plateaued functions. Recently, a generalization of combinatorial designs related to almost difference sets, namely the {\it t-adesign}, was introduced and their applications to constructing linear codes discussed \cite{CUN}, \cite{ADF}, \cite{MI}. In this correspondence we construct several new families of partial geometric difference sets and partial geometric difference families thereby giving new directed strongly regular graphs, and we also discuss some of their links to partially balanced designs and $2$-adesigns, and make an investigation into when an $2$-adesign is partial geometric.
\par
This correspondence is organized as follows. Section \ref{sec2} recalls several preliminary concepts that will be used throughout the sequel. In Section \ref{sec3} we construct four families of partial geometric difference sets, in Section \ref{sec4} we construct six families of partial geometric difference families, in Section \ref{sec5} we discuss some of the links between partially balanced designs, $2$-adesigns, and partial geometric designs, and investigate when a $2$-adesign is partial geometric. Section \ref{sec6} concludes the paper.

\section{Preliminaries}\label{sec2}

\subsection{Partial Geometric Designs}

 An {\it incidence structure} is a pair $(V,\mathcal{B})$ where $V$ is a finite set of points and $\mathcal{B}$ is a finite set blocks composed of points of $V$. For a given point $u\in V$, its {\it replication number} is the number of blocks of $\mathcal{B}$ in which it occurs, and is denoted by $r_{u}$. Given two distinct points $u,w\in V$, their {\it index} is the number of blocks in which they occur together, and is denoted $r_{uw}$. A {\it tactical configuration} is an incidence structure $(V,\mathcal{B})$ where the cardinalities of blocks in $\mathcal{B}$ and the replication numbers of points in $V$ are both constant.
 \par
 Let $(V,\mathcal{B})$ be a tactical configuration where $|V|=v$, each block has cardinality $k$, and each point has replication number $r$. For each point $u\in V$ and each block $b\in \mathcal{B}$, let $s(u,b)$ denote the number of flags $(w,c)\in V\times \mathcal{B}$ such that $w\in b\setminus\{u\},u\in c$ and $c\neq b$. If there are integers $\alpha'$ and $\beta'$ such that \[
 s(u,b)=\begin{cases} \alpha', \text{ if } u\in b, \\
                      \beta', \text{ otherwise},\end{cases} \] as $(u,b)$ runs over $V\times\mathcal{B}$, then we say that $(V,\mathcal{B})$ is a {\it partial geometric design} with parameters $(v,k,r;\alpha',\beta')$.

 \subsection{Partial Geometric Difference Sets and Partial Geometric Difference Families}

Let $G$ be a finite (additive) Abelian group and $S\subset G$. We denote the multiset $\left[x-y \mid x,y\in S\right]$ by $\Delta (S)$. For a family $\mathcal{S}=\{S_{1},...,S_{n}\}$ of subsets of $G$, we denote the multiset union $\bigsqcup_{i=1}^{n}\Delta (S_{i})$ by $\Delta (\mathcal{S})$. For a subset $S\subset G$ we denote $|\{(x,y)\in S\times S \mid z=x-y\}|$ by $\delta_{S}(z)$. For a family $\mathcal{S}=\{S_{1},...,S_{n}\}$ of subsets of $G$ we denote $|\{(x,y)\in S_{i}\times S_{i} \mid x = x-y\}|$ by $\delta_{i}(z)$.
 \par
 Let $v,k$ and $n$ be integers with $v>k>2$. Let $G$ be a group of order $v$. Let $\mathcal{S}=\{S_{1},...,S_{n}\}$ be a collection of distinct $k$-subsets of $G$. If there are constants $\alpha,\beta$ such that for each $x\in G$ and each $i\in \{1,...,n\}$, \[
 \sum_{i=1}^{n}\sum_{y\in S_{i}}\delta_{S_{i}}(x-y)=\begin{cases} \alpha, \text{ if } x\in S_{i}, \\
                                                         \beta, \text{ otherwise}, \end{cases}\] then we say $\mathcal{S}$ is a {\it partial
 geometric difference family} with parameters $(v,k,n;\alpha,\beta)$. When $n=1$ and $\mathcal{S}=\{S\}$, we simply say that $S$ is a {\it partial geometric difference set} with parameters are $(v,k;\alpha,\beta)$.
 \par
 Again let $G$ be a group and $\mathcal{S}=\{S_{1},...,S_{n}\}$ a collection of distinct $k$-subsets of $G$. We call the set of translates $\bigcup_{i=1}^{n}\{S_{i}+g\mid g\in G\}$ the {\it development} of $\mathcal{S}$, and denote it by $Dev(\mathcal{S})$. The following theorem was proved in \cite{KN}.
 \begin{theorem} Let $\mathcal{S}=\{S_{1},...,S_{n}\}$ be a collection of distinct $k$-subsets of a group $G$ of order $v$. If $\mathcal{S}$ is a partial geometric difference family with parameters $(v,k,n;\alpha,\beta)$, then $(G,Dev(\mathcal{S}))$ is a partial geometric design with parameters $(v,k,kn;\alpha',\beta')$ where $\alpha'=\sum_{i=1}^{n}\sum_{y\in S_{i}\setminus\{x\}}(\delta_{S_{i}}(x-y)-1)$ for $x\notin \mathcal{S}$, and $\beta'=\sum_{i=1}^{n}\sum_{y\in S_{i}}\delta_{S_{i}}(x-y)$ for $x\in \mathcal{S}$ (see Remark \ref{re2.0}).
 \end{theorem}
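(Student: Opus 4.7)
The plan is to verify the partial geometric design condition by a direct computation of $s(u,b)$ for arbitrary $u\in G$ and $b\in Dev(\mathcal{S})$. First I would check that $(G,Dev(\mathcal{S}))$ is a tactical configuration: every block is a translate $S_i+g$ and therefore has cardinality $k$, while a point $u\in G$ lies in $S_i+g$ precisely when $g\in u-S_i$, so $u$ belongs to $k$ translates of each $S_i$ and hence to $kn$ blocks in total. This gives the first three parameters $(v,k,kn)$.

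The central step is a double count of flags. Fixing $u\in G$ and $b=S_i+g$ and enumerating the pairs $(w,c)$ with $w\in b\setminus\{u\}$, $w\in c$, $u\in c$, $c\neq b$ by first summing over blocks $c$ containing $u$ and then subtracting the $c=b$ contribution yields
\[
s(u,b) \;=\; \sum_{w\in b} r_{u,w} \;-\; (k+kn-1)\,[u\in b],
\]
where $r_{u,w}$ is the number of blocks containing both $u$ and $w$, with the convention $r_{u,u}:=kn$. A short count in $Dev(\mathcal{S})$ then gives $r_{u,w}=\sum_{j=1}^{n}\delta_{S_j}(u-w)$, since a translate $S_j+h$ contains both $u$ and $w$ iff $u-w\in S_j-S_j$.

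Using the translation invariance of the development I may assume $g=0$ and write $x:=u$, so that $u\in b$ iff $x\in S_i$; together with the symmetry $\delta_{S_j}(-z)=\delta_{S_j}(z)$ this gives
\[
s(u,b) \;=\; \sum_{y\in S_i}\sum_{j=1}^{n} \delta_{S_j}(x-y) \;-\; (k+kn-1)\,[x\in S_i].
\]
To close the argument I would invoke the partial geometric difference family hypothesis, which forces the double sum on the right to take only two values according as $x$ lies in a member of $\mathcal{S}$ or not. Separating out the diagonal term $y=x$, which contributes $\sum_j\delta_{S_j}(0)=kn$ and appears exactly when $x\in S_i$, and then rearranging, produces the two explicit expressions $\alpha'=\sum_{i}\sum_{y\in S_i\setminus\{x\}}(\delta_{S_i}(x-y)-1)$ and $\beta'=\sum_{i}\sum_{y\in S_i}\delta_{S_i}(x-y)$ stated in the theorem.

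The main obstacle is precisely this last identification: the incidence count naturally produces the cross-correlation sum $\sum_{y\in S_i}\sum_j\delta_{S_j}(x-y)$ with the outer index $i$ fixed by the chosen block, whereas the defining condition of the difference family is phrased as a single auto-correlation sum $\sum_i\sum_{y\in S_i}\delta_{S_i}(x-y)$. Matching the two requires careful bookkeeping of the diagonal contribution and exploitation of the freedom in the block-index $i$ (the independence of the resulting value from $i$ is what makes the output of the count a genuine pair $(\alpha',\beta')$), and this reconciliation is the technical crux of the argument.
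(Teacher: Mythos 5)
The paper itself offers no proof of this statement; it is quoted from [KN] (Nowak et al.) and the author even appends Remark 2.2 conceding that the parameter formulas are garbled between the sources. So I can only judge your argument on its own terms. Its skeleton is correct and is surely the argument of [KN]: the tactical-configuration check (with the caveat that $Dev(\mathcal{S})$ must be read as a multiset of $vn$ translates, or the translates assumed pairwise distinct, for the replication number to be $kn$), the flag count $s(u,b)=\sum_{w\in b}r_{uw}-(kn+k-1)[u\in b]$ with $r_{uu}:=kn$, and the identity $r_{uw}=\sum_{j}\delta_{S_j}(u-w)$ (via $u,w\in S_j+h\iff h\in(u-S_j)\cap(w-S_j)$, whose cardinality is $\delta_{S_j}(u-w)$) are all right.

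The gap is the final step, which you explicitly leave unresolved, and you have misdiagnosed its nature. Your count produces, for $b=S_i+g$ and $x=u-g$, the cross-correlation sum $\sum_{y\in S_i}\sum_{j=1}^{n}\delta_{S_j}(x-y)$ with the outer index $i$ fixed by the block. If the hypothesis really were only that the single auto-correlation quantity $\sum_{i}\sum_{y\in S_i}\delta_{S_i}(x-y)$ is two-valued, there would be no way to finish: that scalar does not determine the $n$ separate sums $\sum_{j}\sum_{y\in S_i}\delta_{S_j}(x-y)$ for each fixed $i$, so no amount of bookkeeping reconciles the two. The resolution is that the definition of a partial geometric difference family in [KN] is exactly the statement that $\sum_{j=1}^{n}\sum_{y\in S_i}\delta_{S_j}(x-y)$ equals $\alpha$ when $x\in S_i$ and $\beta$ when $x\notin S_i$, \emph{for every} $i$ and $x$; the displayed formula in this paper's Section 2.2, in which $i$ serves simultaneously as the bound summation index and the free index of the case distinction, is a misprint. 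Read correctly, the quantity your flag count produces is verbatim the hypothesis, the independence from $i$ that you worry about is part of what is being assumed, and the proof closes in one line: $s(u,b)=\alpha-(kn+k-1)$ or $\beta$ according as $u\in b$ or not. You should state the definition you are actually using and then substitute; as written, your last paragraph asserts that a reconciliation exists without providing one, and under the literal definition none does.
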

 \begin{remark}\label{re2.0} The parameters for the corresponding partial geometric designs seem to disagree in Lemma $2.4$ of \cite{O} and Theorem $3$ of \cite{KN}. To see this, the reader should compare Definition $2.2$ of \cite{O} to Definition $1$ of \cite{KN}.
 \end{remark}

 \subsection{Strongly Regular Graphs and Digraphs}\label{ssec2.0}
 In this correspondence all graphs are assumed to be loopless and simple. Let $\Gamma$ be an undirected graph with $v$ vertices. Let $A$ denote the adjacency matrix of $\Gamma$. Then $\Gamma$ is called {\it strongly regular} with parameters $(v,k,\lambda,\mu)$ if \[
 A^{2}=kI+\lambda A + \mu (J-I-A) \text{ and } AJ=JA=kJ. \] A directed graph $\Gamma$ with adjacency matrix $A$ is said to be {\it directed strongly regular} with parameters $(v,k,t,\lambda,\mu)$ if \[
 A^{2}=tI+\lambda A + \mu (J-I-A) \text{ and } AJ=JA=kJ.\]
 The following theorems were proved in \cite{BOS}.
 \begin{theorem} Let $(V,\mathcal{B})$ be a tactical configuration, and let $\Gamma$ be the directed graphs with vertex set \[
 \mathcal{V}=\{(u,b)\in V\times \mathcal{B}\mid u\notin V\}\] and adjacency given by \[
 (u,b)\rightarrow (w,c) \text{ if and only if } u\in c.\] Then $\Gamma$ is directed strongly regular if and only if $(V,\mathcal{B})$ is partial geometric.
 \end{theorem}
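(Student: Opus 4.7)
The plan is to count directed walks of length two in $\Gamma$ and match the resulting expression against the defining identity $A^{2}=tI+\lambda A+\mu (J-I-A)$. Denote the parameters of the tactical configuration by $v,k,r$; since flags are counted both as $vr$ and as $|\mathcal{B}|k$, one has $|\mathcal{B}|k=vr$. From any antiflag $(u,b)$ the out-neighbours are the pairs $(w,c)$ with $u\in c$ and $w\notin c$, so choosing $c$ among the $r$ blocks through $u$ and $w$ among the $v-k$ points off $c$ yields out-degree $r(v-k)$. A dual count gives in-degree $k(|\mathcal{B}|-r)=r(v-k)$, so $AJ=JA=r(v-k)J$.

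Next I would compute $(A^{2})_{(u,b),(w,c)}$, which equals the number of antiflags $(x,d)$ with $u\in d$, $x\in c$, and $x\notin d$. The choice $x=u$ contributes nothing (it would require $u\in d$ and $u\notin d$), so
\[
(A^{2})_{(u,b),(w,c)} = \sum_{x\in c\setminus\{u\}}(r-r_{ux}).
\]
The quantity $S(u,c):=\sum_{x\in c\setminus\{u\}}r_{ux}$ counts triples $(x,d)$ with $x\in c\setminus\{u\}$ and $u,x\in d$, and hence differs from $s(u,c)$ only in whether the block $d=c$ is allowed. When $u\notin c$ the case $d=c$ is automatically excluded, so $S(u,c)=s(u,c)$; when $u\in c$ the single choice $d=c$ contributes all $k-1$ points $x\in c\setminus\{u\}$, so $S(u,c)=s(u,c)+(k-1)$. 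Substituting yields
\[
(A^{2})_{(u,b),(w,c)}=\begin{cases}(r-1)(k-1)-s(u,c), & u\in c,\\ rk-s(u,c), & u\notin c.\end{cases}
\]
Since $u\notin b$ for an antiflag, the diagonal case $(u,b)=(w,c)$ lies in the second branch and equals $rk-s(u,b)$.

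To finish I would match these cases against the DSRG identity. Adjacency $(u,b)\to(w,c)$ is equivalent to $u\in c$, so the ``adjacent'' entries of $A^{2}$ are $(r-1)(k-1)-s(u,c)$, while the diagonal and the non-adjacent off-diagonal entries are $rk-s(u,c)$. If $(V,\mathcal{B})$ is partial geometric with parameters $(v,k,r;\alpha',\beta')$, then $s(u,c)$ equals $\alpha'$ or $\beta'$ according as $u\in c$ or $u\notin c$, so the identity $A^{2}=tI+\lambda A+\mu(J-I-A)$ holds with $\lambda=(r-1)(k-1)-\alpha'$ and $t=\mu=rk-\beta'$, which is directed strong regularity. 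Conversely, if $\Gamma$ is directed strongly regular then constancy of the ``adjacent'' entries forces $s(u,c)$ to be constant on flags, and constancy of the ``non-adjacent'' entries forces it to be constant on antiflags, provided each pair $(u,c)$ extends to some antiflag pair $((u,b),(w,c))$ with $b\neq c$; this is automatic once $|\mathcal{B}|\geq 2$.

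The main obstacle is the clean bookkeeping in relating $S(u,c)$ to $s(u,c)$: the $+(k-1)$ correction in the case $u\in c$ is exactly what produces the coefficient $(r-1)(k-1)$ in the formula for $\lambda$ rather than $r(k-1)$, and getting this correction right is the decisive step. Everything else reduces to standard incidence-counting and coefficient matching.
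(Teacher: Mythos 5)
Your argument is correct. Note first that the paper does not prove this statement at all: it is quoted from Brouwer, Olmez and Song and contains an obvious typo (the vertex set should be the antiflags $\{(u,b): u\notin b\}$, not $u\notin V$), which you have silently and correctly repaired. Your computation is the standard one and it checks out: the walk count $(A^{2})_{(u,b),(w,c)}=\sum_{x\in c\setminus\{u\}}(r-r_{ux})$ depends only on the pair $(u,c)$, your bookkeeping relating $\sum_{x\in c\setminus\{u\}}r_{ux}$ to $s(u,c)$ (the $+(k-1)$ correction when $u\in c$, coming from the excluded block $d=c$) is exactly right, and the resulting case split $(r-1)(k-1)-s(u,c)$ versus $rk-s(u,c)$ matches the DSRG identity with $\lambda=(r-1)(k-1)-\alpha'$ and $t=\mu=rk-\beta'$, with $AJ=JA=r(v-k)J$. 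The converse direction is also handled correctly, including the observation that the diagonal entries alone force $s$ to be constant on antiflags; the only caveat, which you acknowledge, is the mild non-degeneracy needed ($k<v$ and $r<|\mathcal{B}|$) so that every flag and antiflag of the configuration is witnessed by some pair of vertices of $\Gamma$.
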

 \begin{theorem}Let $(V,\mathcal{B})$ be a tactical configuration, and let $\Gamma$ be the directed graphs with vertex set \[
 \mathcal{V}=\{(u,b)\in V\times \mathcal{B}\mid u\in V\}\] and adjacency given by \[
 (u,b)\rightarrow (w,c) \text{ if and only if } (u,b)\neq (w,c) \text{ and } u\in c.\] Then $\Gamma$ is directed strongly regular if and only if $(V,\mathcal{B})$ is partial geometric.
 \end{theorem}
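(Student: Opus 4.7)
The plan is to compute $(A^{2})_{(u,b),(w,c)}$ in three regimes---diagonal, off-diagonal with an arc from $(u,b)$ to $(w,c)$, and off-diagonal with no arc---and to express each value in terms of the quantity $s(u,c)$ from the definition of a partial geometric design. I read the vertex set as the set of flags $\{(u,b):u\in b\}$: the phrase ``$u\in V$'' in the statement appears to be a typo for ``$u\in b$'', since otherwise the diagonal of $A^{2}$ is not constant. First I would check regularity: the out-degree of a flag $(u,b)$ is the number of flags $(w,c)\neq(u,b)$ with $u\in c$, which equals $rk-1$ (there are $r$ blocks through $u$ and each contributes $k$ flags, and $(u,b)$ itself is excluded); by symmetry the in-degree is $rk-1$ as well, so $AJ=JA=(rk-1)J$ holds automatically from the tactical assumption.

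For $A^{2}$ I would unfold $(A^{2})_{(u,b),(w,c)}$ as the number of flags $(x,d)$ with $u\in d$, $x\in c$, $(x,d)\neq(u,b)$, and $(x,d)\neq(w,c)$. Splitting the count according to whether $x=u$ (respectively $d=c$) isolates the terms enumerated directly from $r$, $k$ and the indicator $\mathbf{1}[u\in c]$, while the remaining ``$x\neq u$, $d\neq c$'' summand telescopes to $\sum_{d\ni u,\,d\neq c}|c\cap d\setminus\{u\}|=s(u,c)$ (and to $s(u,b)$ on the diagonal, where $c=b$). Careful bookkeeping of the two exclusions produces the clean formula
\[
(A^{2})_{(u,b),(w,c)} \;=\;
\begin{cases}
r+k-2+s(u,b), & (u,b)=(w,c),\\
r+k-3+s(u,c), & (u,b)\neq(w,c),\ u\in c,\\
s(u,c), & u\notin c.
\end{cases}
\]

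From this formula the equivalence is read off directly. If $(V,\mathcal{B})$ is partial geometric with parameters $(v,k,r;\alpha',\beta')$, then $s(u,b)=\alpha'$ on flags and $s(u,c)=\beta'$ on anti-flags, so $A^{2}$ takes only the three constant values above and $\Gamma$ is a DSRG with $t=r+k-2+\alpha'$, $\lambda=t-1=r+k-3+\alpha'$, and $\mu=\beta'$. Conversely, if $\Gamma$ is a DSRG, constancy of the diagonal of $A^{2}$ forces $s(u,b)$ to be constant over flags and constancy of the non-arc entries forces $s(u,c)$ to be constant over anti-flags, so $(V,\mathcal{B})$ is partial geometric. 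The main obstacle is the bookkeeping in the middle step: the two exclusions $(x,d)\neq(u,b)$ and $(x,d)\neq(w,c)$ coincide on the diagonal but are distinct elsewhere, and the ``$x=u$'' and ``$d=c$'' subcases must be isolated carefully so that the three regimes separate cleanly and the residual sum collapses to $s(u,c)$.
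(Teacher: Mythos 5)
Your proposal is correct: the entrywise computation of $A^{2}$ on flags checks out (diagonal $r+k-2+s(u,b)$, arc entries $r+k-3+s(u,c)$, non-arc entries $s(u,c)$), the reading of the vertex set as the flag set $\{(u,b):u\in b\}$ is the intended one, and the equivalence with constancy of $s$ on flags and anti-flags follows exactly as you say. The paper itself gives no proof of this statement---it quotes the theorem from Brouwer, Olmez and Song---and your argument is the standard direct verification of that result, so there is nothing in the source to diverge from.
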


 \subsection{Group Ring Notation}
 For any finite group $G$ the {\it group ring} $\mathbb{Z}\left[G\right]$ is defined as the set of all formal sums of elements of $G$, with coefficients in $\mathbb{Z}$. The operations ``$+$'' and ``$\cdot$'' on $\mathbb{Z}\left[G\right]$ are given by \[
\sum_{g\in G}a_{g}g+\sum_{g\in G}b_{g}g=\sum_{g\in G}(a_{g}+b_{g})g \] and \[
\left(\sum_{g\in G}a_{g}g\right)\left(\sum_{h\in G}b_{h}h\right)=\sum_{g,h\in G}a_{g}b_{h}(g+h).\] where are $a_{g},b_{g}\in \mathbb{Z}$.
\par
The group ring $\mathbb{Z}\left[G\right]$ is a ring with multiplicative identity $\mathbf{1}=\underline{Id}$, and for any subset $X\subset G$, we denote by $\underline{X}$ the sum $\sum_{x\in X}x$, and we denote by $\underline{X}^{-1}$ the sum $\sum_{x\in X}(-x)$.

\subsection{Cyclotomic Classes and Cyclotomic Numbers}
Let $q$ be a prime power, and $\gamma\in \mathbb{F}_{q^{2}}$ primitive. The {\it cyclotomic classes} of order $e$ are given by $C_{i}^{e}=\gamma^{i}\langle \gamma^{e} \rangle$ for $i=0,1,...,e-1$. Define $(i,j)=|C_{i}^{e}\cap (C_{j}^{e}+1)|$. It is easy to see there are at most $e^{2}$ different cyclotomic numbers of order $e$. When it is clear from the context, we simply denote $(i,j)_{e}$ by $(i,j)$. We will need the following lemma.

\begin{lemma}\label{le2.0} {\rm \cite{NOW}} Let $q=em+1$ be a prime power for some positive integers $e$ and $f$. In the group ring $\mathbb{Z}\left[\mathbb{F}_{q}\right]$ we have \[
\underline{C_{i}^{e}}\underline{C_{j}^{e}}=a_{ij}\mathbf{1}+\sum_{k=0}^{e-1}(j-i,k-i)_{e}\underline{C_{k}^{e}}\] where \[
a_{ij}=\begin{cases} f, \text{ if } m \text{ is even and } j=i, \\
                     f, \text{ if } m \text{ is odd and } j=i+\frac{e}{2}, \\
                     0, \text{ otherwise. }\end{cases}\]
\end{lemma}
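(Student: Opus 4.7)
The plan is to evaluate the product $\underline{C_i^e}\,\underline{C_j^e} = \sum_{x \in C_i^e}\sum_{y \in C_j^e}(x+y)$ in $\mathbb{Z}\left[\mathbb{F}_q\right]$ by computing, for each $z \in \mathbb{F}_q$, the coefficient $N(z) := |\{(x,y)\in C_i^e\times C_j^e : x+y=z\}|$. Once $N(z)$ is seen to depend only on the cyclotomic class containing $z$, the product packages as $N(0)\,\mathbf{1} + \sum_{k=0}^{e-1} N_k\,\underline{C_k^e}$, where $N_k$ is the common value on $C_k^e$, and it remains to identify $N_k$ with $(j-i,k-i)_e$.

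For the coefficient of $\mathbf{1}$, $N(0) = |\{x \in C_i^e : -x \in C_j^e\}|$. Here I would record the standard fact that $-1 \in C_0^e$ when $m := (q-1)/e$ is even (the characteristic-$2$ case is subsumed, since then $-1 = 1$) and $-1 \in C_{e/2}^e$ when $m$ is odd (which forces $e$ even). Thus $-C_i^e = C_i^e$ in the first case and $-C_i^e = C_{i+e/2}^e$ in the second, so $N(0)$ equals $|C_i^e| = m$ in exactly the two cases listed in the statement and is $0$ otherwise; this produces the term $a_{ij}\,\mathbf{1}$.

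For $z \in C_k^e$, the map $(x,y) \mapsto (x/z, y/z)$ is a bijection between $\{(x,y)\in C_i^e \times C_j^e : x+y = z\}$ and $\{(u,v)\in C_{i-k}^e\times C_{j-k}^e : u+v = 1\}$ (indices mod $e$), showing that $N(z)$ is constant on $C_k^e$ and equal to $N_1(i-k,j-k)$, where $N_1(a,b) := |\{u\in C_a^e : 1-u \in C_b^e\}|$. I then rewrite $1-u = -(u-1)$ and use the location of $-1$ from the previous paragraph to convert $N_1(a,b)$ into a single cyclotomic number $(\cdot,\cdot)_e$: this produces $(b,a)_e$ when $m$ is even and $(b+e/2,a)_e$ when $m$ is odd.

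The main obstacle is the final reindexing: the previous step produces $N_k$ in the form $(j-k,i-k)_e$ (for $m$ even) or $(j-k+e/2,i-k)_e$ (for $m$ odd), whereas the lemma writes it as $(j-i,k-i)_e$. Reconciling the two requires the standard symmetries of cyclotomic numbers, namely $(a,b)_e = (e-a,b-a)_e$ (valid unconditionally, via $x \mapsto 1/x$) together with $(a,b)_e = (b,a)_e$ when $m$ is even and $(a,b)_e = (b+e/2,a+e/2)_e$ when $m$ is odd (both via $x \mapsto -1-x$ combined with the preceding involution). A short substitution with these identities turns the computed expressions into $(j-i,k-i)_e$ in both parities, completing the proof.
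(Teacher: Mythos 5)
The paper offers no proof of this lemma --- it is quoted from \cite{NOW} as a known fact --- so there is nothing internal to compare against; I can only judge your argument on its own terms. Your overall strategy is the standard and correct one: compute the coefficient $N(z)$ of each $z$ in the group-ring product, observe that it is constant on cyclotomic classes, and identify $N(0)$ with $a_{ij}$ via the location of $-1$ (your computation of $a_{ij}$ is correct, modulo the paper's typo conflating $f$ and $m$). One simplification worth noting: if for $z\in C_k^e$ you normalize by $x$ rather than by $z$ (write $x+y=z$ as $1+t=z/x$ with $t=y/x\in C_{j-i}^e$), you land directly on $N(z)=\bigl|\{t\in C_{j-i}^e : t+1\in C_{k-i}^e\}\bigr|$, which is the cyclotomic number $(j-i,k-i)_e$ in the ``$t+1$'' convention with no reindexing needed; your choice to normalize by $z$ is what forces the chain of symmetry identities at the end.

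That final reindexing is where the gap lies: it is asserted, not carried out, and as written it does not cohere with the paper's own definition $(i,j)_e=|C_i^e\cap(C_j^e+1)|$, i.e.\ the number of $w\in C_i^e$ with $w-1\in C_j^e$. Under that definition, your $N_1(a,b)=|\{u\in C_a^e : 1-u\in C_b^e\}|$ equals $(a,b+s)_e$ where $-1\in C_s^e$, not $(b+s,a)_e$ as you claim; likewise the identity ``$(a,b)_e=(e-a,b-a)_e$ unconditionally'' holds for the transposed convention $|\{x\in C_a : x+1\in C_b\}|$ but picks up a shift by $s$ under the paper's convention. The two conventions are transposes of one another, and the transpose symmetry is $(a,b)=(b,a)$ only when $m$ is even; for $m$ odd it is $(a,b)=(b+\frac{e}{2},a+\frac{e}{2})$, so the discrepancy does not wash out. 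Carrying your computation through consistently with the paper's stated definition yields $N_k=(k-i,j-i)_e$, the transpose of the lemma's $(j-i,k-i)_e$ --- which suggests the mismatch is really between the paper's definition and the convention of \cite{NOW}, but your write-up silently mixes the two. To close the argument you should fix one convention, state and prove the two or three symmetry identities in that convention, and actually perform the substitution for both parities of $m$ rather than gesturing at it. (A minor further quibble: in characteristic $2$ one has $m$ odd and $e$ odd, so $e/2$ is undefined and the case split in the lemma degenerates; your claim that this case is ``subsumed'' needs a separate sentence, though it is irrelevant to the paper's applications, which all take $p$ odd.)
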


\section{New Partial Geometric Difference Sets}\label{sec3}

We will need the following lemmas.

\begin{lemma}\label{le3.1} {\rm \cite{BAUM}} Let $q$ be a prime power and let $C_{i}$, for $i=0,1,...,q$ denote the cyclotomic classes of order $q+1$ in $\mathbb{F}_{q^{2}}$. Then the cyclotomic numbers are given by \begin{eqnarray*}
(0,0) & = & q-2, \\
(i,i)=(i,0)=(0,i) & = & 0, \\
(i,j) & = & 1, (i\neq j). \end{eqnarray*}
\end{lemma}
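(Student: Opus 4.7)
My plan is to exploit the fact that $e = q+1$ makes $C_0$ equal to the prime subfield $\mathbb{F}_q^*$. Since $\langle \gamma^{q+1}\rangle$ is the unique subgroup of $\mathbb{F}_{q^2}^*$ of order $q-1$, it must coincide with $\mathbb{F}_q^*$. Consequently, for each $i$, the set $C_i \cup \{0\} = \gamma^i \mathbb{F}_q$ is a one-dimensional $\mathbb{F}_q$-subspace of $\mathbb{F}_{q^2}$ (viewed as a $2$-dimensional $\mathbb{F}_q$-vector space), and the $q+1$ cyclotomic classes correspond bijectively to the $q+1$ distinct lines through the origin. I would keep this geometric picture as the organizing idea and split into four cases according to whether $i$ and $j$ are zero.

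For $(0,0)$, I would simply count pairs $x,y\in\mathbb{F}_q^*$ with $x = y+1$: parameterizing by $y$, the admissible values are all of $\mathbb{F}_q^*\setminus\{-1\}$ (to keep $x\neq 0$), giving $q-2$ solutions. For $(i,0)$ with $i\neq 0$, any solution $x = y+1$ with $y\in\mathbb{F}_q^*$ would force $x\in\mathbb{F}_q$, but $C_i\cap\mathbb{F}_q = \emptyset$ since $\gamma^i\notin\mathbb{F}_q$; the symmetric argument handles $(0,j)$. For $(i,i)$ with $i\neq 0$, writing $x = \gamma^i a$, $y = \gamma^i b$ with $a,b\in\mathbb{F}_q^*$, the equation $x - y = 1$ becomes $\gamma^i(a-b) = 1$, which would force $\gamma^i \in \mathbb{F}_q$, a contradiction.

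The only case with real content is $i\neq j$, both nonzero. Here the key observation is that $L_i := C_i\cup\{0\}$ and $L_j := C_j\cup\{0\}$ are distinct lines, so $\mathbb{F}_{q^2} = L_i \oplus L_j$. Therefore $1$ admits a unique decomposition $1 = a + b$ with $a\in L_i$, $b\in L_j$; setting $x = a$ and $y = -b$ gives the unique candidate with $x\in L_i$, $y\in L_j$, and $x - y = 1$. It remains to check that $a$ and $b$ are both nonzero: $a=0$ would mean $1 \in L_j$, forcing $j=0$, and similarly for $b$. Since both $i,j\neq 0$, both components are nonzero, so $x\in C_i$ and $y\in C_j$, yielding $(i,j)=1$.

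I do not anticipate any real obstacle here: the lemma is essentially a packaging of the linear-algebraic fact that any two distinct lines through the origin in a $2$-dimensional space decompose every nonzero vector uniquely. The only mildly subtle point is making sure the case analysis is exhaustive and correctly handles the ``degenerate'' components (when $a$ or $b$ lands at $0$), which is precisely what distinguishes the $(0,0)$, $(0,j)$, $(i,0)$, $(i,i)$ rows from the generic case.
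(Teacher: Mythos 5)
Your proof is correct and, unlike the paper, which simply cites this classical result to the literature without proof, it is fully self-contained. The organizing idea --- that $\langle\gamma^{q+1}\rangle$ is the unique subgroup of order $q-1$ in the cyclic group $\mathbb{F}_{q^2}^{*}$, hence equals $\mathbb{F}_q^{*}$, so that the sets $C_i\cup\{0\}$ are exactly the $q+1$ lines through the origin in the plane $\mathbb{F}_{q^2}$ over $\mathbb{F}_q$ --- is the standard geometric route to this cyclotomy (it is essentially why the development of $C_i\cup\{0\}$ recovers the affine plane $AG(2,q)$), and it is cleaner and more elementary than the Gauss-sum computations used in general cyclotomic-number references. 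All five cases check out: the generic case correctly reduces $(i,j)=1$ to the unique decomposition of $1$ in $L_i\oplus L_j$ together with the observation that a zero component would force $i=0$ or $j=0$, and you correctly use that $-y\in C_j$ iff $y\in C_j$ (since $-1\in C_0$). Two cosmetic points: $\mathbb{F}_q$ is the \emph{prime} subfield of $\mathbb{F}_{q^2}$ only when $q$ is prime --- your argument never needs primality, only uniqueness of the order-$(q-1)$ subgroup, so just drop the word ``prime''; and in the $(i,i)$ case you should note the subcase $a=b$ separately (it gives $0=1$, so it is vacuous), since ``$\gamma^i(a-b)=1$ forces $\gamma^i\in\mathbb{F}_q$'' presupposes $a\neq b$. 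Neither affects the validity of the argument.
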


\begin{lemma}\label{le3.0} {\rm \cite{KN}} Let $p$ be a prime and let $C_{i}$, for $i=0,1,...,p$ denote the cyclotomic classes of order $p+1$ in $\mathbb{F}_{p^{2}}$. Let $S_{i}=C_{i}\cup\{0\}$ for $i=0,1,...,p$. If $x\notin S_{j}$ then $|(x-S_{j})\cap C_{i}|=1$ for each $i\in\{0,1,...,p\}\setminus\{j\}$.
\end{lemma}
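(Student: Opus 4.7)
The plan is to reduce the count $|(x-S_j)\cap C_i|$ to an indicator plus a cyclotomic number of order $p+1$, and then read off the answer from Lemma \ref{le3.1}.

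Since $x\notin S_j=C_j\cup\{0\}$ and the classes $C_0,\dots,C_p$ partition $\mathbb{F}_{p^2}^{*}$, there is a unique $k\neq j$ with $x\in C_k$. Writing $x-S_j=(x-C_j)\sqcup\{x\}$ (disjoint because $0\notin C_j$), I would split
\[
|(x-S_j)\cap C_i|=[k=i]+|(x-C_j)\cap C_i|,
\]
where $[\,\cdot\,]$ is the Iverson bracket. The task thus becomes the computation of $|(x-C_j)\cap C_i|$ for $k\neq j$.

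To evaluate this, I would exploit the multiplicative structure of the $C_\ell$. First observe that $-1\in C_0$: in $\mathbb{F}_{p^2}^{*}$ the element $-1$ equals $\gamma^{(p^{2}-1)/2}=\gamma^{(p+1)(p-1)/2}$, whose exponent is a multiple of $p+1$ (trivially for $p=2$, and via $(p-1)/2\in\mathbb{Z}$ for odd $p$). Consequently $-C_j=C_j$, so $x-C_j=x+C_j$. Multiplication by $x^{-1}\in C_{-k}$ is a bijection of $\mathbb{F}_{p^2}^{*}$ sending $C_i\mapsto C_{i-k}$ and $x+C_j\mapsto 1+C_{j-k}$, all indices taken mod $p+1$. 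Hence
\[
|(x-C_j)\cap C_i|=|(1+C_{j-k})\cap C_{i-k}|=(i-k,\,j-k)_{p+1}.
\]

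Now I would finish by invoking Lemma \ref{le3.1} in two subcases. If $k=i$, then $i-k=0$ and $j-k=j-i\not\equiv 0\pmod{p+1}$ (as $i,j\in\{0,\dots,p\}$ are distinct), so $(0,j-i)=0$ and the total is $1+0=1$. If $k\neq i$, then $i-k,\,j-k$ are both nonzero mod $p+1$ and unequal to each other (again because $i\neq j$), so $(i-k,j-k)=1$ and the total is $0+1=1$. In either case $|(x-S_j)\cap C_i|=1$, as claimed. The only real care point in the argument is the verification that $-1\in C_0$, which is what legitimises rewriting a ``difference'' intersection as a ``sum'' intersection and therefore as a genuine cyclotomic number; once that is in place, everything else is elementary bookkeeping.
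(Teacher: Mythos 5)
Your proof is correct. Note that the paper itself gives no argument for this lemma --- it is quoted from \cite{KN} --- so there is no in-text proof to compare against; judged on its own, your reduction is sound: the decomposition $x-S_j=(x-C_j)\sqcup\{x\}$, the observation that $-1\in C_0$ (so $x-C_j=x+C_j$), the normalisation by $x^{-1}\in C_{-k}$ turning the count into the cyclotomic number $(i-k,j-k)_{p+1}$, and the case split on $k=i$ versus $k\neq i$ --- which ensures you only ever invoke the unambiguous values $(0,t)=0$ for $t\not\equiv 0$ and $(s,t)=1$ for distinct nonzero $s,t$ from Lemma \ref{le3.1} --- all check out. It is worth knowing that there is a shorter, purely geometric argument that bypasses cyclotomy entirely: each $S_i=\gamma^i\,\mathbb{F}_p$ is a one-dimensional $\mathbb{F}_p$-subspace of $\mathbb{F}_{p^2}$, i.e.\ a line through the origin in $AG(2,p)$, and the $S_0,\dots,S_p$ are precisely the $p+1$ such lines; for $x\notin S_j$ the set $x-S_j$ is a line parallel to $S_j$ and not containing $0$, so for $i\neq j$ the non-parallel line $S_i$ meets it in exactly one point, which is nonzero and hence lies in $C_i=S_i\setminus\{0\}$. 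Your route has the advantage of exercising the same cyclotomic-number machinery (Lemmas \ref{le2.0} and \ref{le3.1}) that the paper relies on in Theorems \ref{th3.0} and \ref{th3.1}, whereas the geometric route is essentially a two-line argument and also makes the restriction $i\neq j$ transparent (for $i=j$ the lines are parallel and the intersection is empty).
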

The following is our first construction.

\begin{theorem}\label{th3.0} Let $p$ be a prime and let $C_{i}$, for $i=0,1,...,p$ denote the cyclotomic classes of order $p+1$ in $\mathbb{F}_{p^{2}}$. Let $S_{i}=C_{i}\cup\{0\}$ for $i=0,1,...,p$. Let $i',j'\in \{0,1,...,p\}$ be fixed with $i'\neq j'$. Let $m\equiv 0 ($mod $2)$ be a positive integer, and define \[
\Sigma_{l}=l+\{0,2,...,m-2\} \subset \mathbb{Z}_{m} \text{ for } l=0,1.\] Then $S_{i'j'}=\Sigma_{0}\times S_{i'}\cup\Sigma_{1}\times S_{j'}$ is a partial geometric difference set in $(\mathbb{Z}_{m}\times \mathbb{F}_{p^{2}},+)$ with parameters $(mp^{2},mp;(\frac{m}{2})^{2}p(p+3),\frac{3}{4}m^{2}p)$.
\end{theorem}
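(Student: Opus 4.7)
The plan is to verify the standard group-ring characterization of a partial geometric difference set: $S\subseteq G=\mathbb{Z}_m\times \mathbb{F}_{p^2}$ is a PGDS with parameters $(v,k;\alpha,\beta)$ if and only if
\[
\underline{S}\,\underline{S}^{-1}\,\underline{S} \;=\; (\alpha-\beta)\,\underline{S} + \beta\,\underline{G},
\]
since the coefficient of $x$ on the left is precisely $\sum_{y\in S}\delta_S(x-y)$. An easy check (using that $m$ is even and that $-1\in C_0$ in $\mathbb{F}_{p^2}^{*}$) shows $-\Sigma_l=\Sigma_l$ and $-C_i=C_i$, hence $\underline{S}^{-1}=\underline{S}$ and the task reduces to computing $\underline{S}^{3}$ in the group ring.

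I will work in $\mathbb{Z}[\mathbb{Z}_m]\otimes\mathbb{Z}[\mathbb{F}_{p^2}]$, setting $A=\underline{\Sigma_0}$, $B=\underline{\Sigma_1}$, $U=\underline{S_{i'}}$, $V=\underline{S_{j'}}$, so that $\underline{S}=AU+BV$ and, by commutativity in each tensor factor,
\[
\underline{S}^{3} = A^{3}U^{3} + 3A^{2}B\,U^{2}V + 3AB^{2}\,UV^{2} + B^{3}V^{3}.
\]
The $\mathbb{Z}[\mathbb{Z}_m]$ side is routine: $\Sigma_0=2\mathbb{Z}_m$ is an index-$2$ subgroup with coset $\Sigma_1$, so $A^{2}=(m/2)A$, $AB=(m/2)B$, $B^{2}=(m/2)A$, and hence $A^{3}=AB^{2}=(m/2)^{2}A$ and $A^{2}B=B^{3}=(m/2)^{2}B$. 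The real work is on the $\mathbb{Z}[\mathbb{F}_{p^2}]$ side: applying Lemma \ref{le2.0} (with $e=p+1$, giving $a_{ii}=p-1$) together with the cyclotomic numbers of Lemma \ref{le3.1} yields
\[
T_{i'}^{2}=(p-1)\mathbf{1}+(p-2)T_{i'}, \qquad T_{i'}T_{j'}=\underline{\mathbb{F}_{p^2}^{*}}-T_{i'}-T_{j'},
\]
where $T_i=\underline{C_i}$. Absorbing the extra point $0\in S_i$ collapses these into the two clean identities
\[
U^{2}=pU \qquad\text{and}\qquad UV=\underline{\mathbb{F}_{p^2}},
\]
from which $U^{3}=p^{2}U$, $V^{3}=p^{2}V$, and $U^{2}V=UV^{2}=p\,\underline{\mathbb{F}_{p^2}}$ (using $U\cdot\underline{\mathbb{F}_{p^2}}=|S_{i'}|\,\underline{\mathbb{F}_{p^2}}=p\,\underline{\mathbb{F}_{p^2}}$).

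Assembling everything and using $A+B=\underline{\mathbb{Z}_m}$ so that $(A+B)\,\underline{\mathbb{F}_{p^2}}=\underline{G}$, I obtain
\[
\underline{S}^{3} = (m/2)^{2}\bigl[\,p^{2}\,\underline{S} + 3p\,\underline{G}\,\bigr],
\]
and comparison with the characterization reads off $\beta=3p(m/2)^{2}=\tfrac{3}{4}m^{2}p$ and $\alpha-\beta=p^{2}(m/2)^{2}$, i.e.\ $\alpha=(m/2)^{2}p(p+3)$, exactly as claimed. The only nontrivial step is distilling the cyclotomic computation into the two identities $U^{2}=pU$ and $UV=\underline{\mathbb{F}_{p^2}}$; once those are in hand the rest is routine group-ring bookkeeping.
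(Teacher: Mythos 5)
Your proof is correct, and it takes a genuinely different route from the one in the paper. The paper's proof is a case analysis on the translates $(h,z)-S_{i'j'}$: it splits according to whether $h\in\Sigma_{0}$ or $\Sigma_{1}$ and whether $z\in S_{i'}$, $z\in S_{j'}$, or $z\notin S_{i'}\cup S_{j'}$, and counts multiplicities in $\Delta(S_{i'j'})$ in each case using Lemma \ref{le3.0} and Lemma \ref{le3.1}, with the group ring entering only as an auxiliary tool for the cases $z\notin S_{i'}\cup S_{j'}$. You instead verify the equivalent global identity $\underline{S}\,\underline{S}^{-1}\,\underline{S}=(\alpha-\beta)\underline{S}+\beta\,\underline{G}$ in a single computation, which is legitimate because the coefficient of $x$ on the left is exactly $\sum_{y\in S}\delta_{S}(x-y)$, and because $-S_{i'j'}=S_{i'j'}$ (your justification via $m$ even and $-1\in C_{0}$ is correct). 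The computation itself is sound: $\Sigma_{0}=2\mathbb{Z}_{m}$ is an index-two subgroup with coset $\Sigma_{1}$, giving $A^{2}=\frac{m}{2}A$, $AB=\frac{m}{2}B$, $B^{2}=\frac{m}{2}A$; and each $S_{i}=\gamma^{i}\mathbb{F}_{p}$ is an additive subgroup of order $p$ with $S_{i'}\cap S_{j'}=\{0\}$, so $U^{2}=pU$ and $UV=\underline{\mathbb{F}_{p^{2}}}$ also follow directly from the subgroup structure (the cyclotomic derivation you give via Lemmas \ref{le2.0} and \ref{le3.1} is valid but is more machinery than is strictly needed, and it does implicitly use that $p$ is odd so that $f=p-1$ is even, consistent with the paper). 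The multinomial expansion then yields $\underline{S}^{3}=(\frac{m}{2})^{2}(p^{2}\underline{S}+3p\,\underline{G})$, and both parameters are read off at once. What the paper's method buys is explicit knowledge of the individual multiplicities $n_{u}$ in $\Delta(S_{i'j'})$, which it reuses later in Theorem \ref{th3.1} and Theorem \ref{th4.0}; what yours buys is brevity and a much smaller surface for error, since no case analysis is required.
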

\begin{proof}
First note that $S_{0}\cong\mathbb{F}_{p}$, and $S_{i'},S_{j'}$ are both subgroups of $(\mathbb{F}_{p^{2}},+)$. For each $z\in \mathbb{F}_{p^{2}}$ and each $i\in\{0,1,..,p\}$ we have \begin{eqnarray}\label{eq3.2}
\delta_{S_{i}}&=&\begin{cases} |S_{i}|, \text{ if } z\in S_{i}, \\
                             0, \text{ otherwise}, \end{cases} = \begin{cases} p, \text{ if } z\in S_{i}, \\
                                                                               0, \text{ otherwise}.\end{cases}\end{eqnarray}
Now suppose that $(h,z)\in S_{i'j'}$. Then we have \begin{eqnarray}\label{eq3.0}
(h,z)-S_{i'j'} & = & \begin{cases} \Sigma_{0}\times S_{i'}\cup \Sigma_{1}\times (z-S_{j'}), \text{ if } h\in \Sigma_{0},z\in S_{i'}, \\
                                   \Sigma_{1}\times (z-S_{i'})\cup \Sigma_{0}\times S_{j'}, \text{ if } h\in \Sigma_{1},z\in S_{j'}.\end{cases}\end{eqnarray}
Denote the number of occurrences of $u$ in $\Delta(S_{i'j'})$ by $n_{u}$. Then $\sum_{(h',z')\in S_{i'j'}}\delta_{S_{i'j'}}((h,z)-(h',z'))$ can be written \[ \begin{cases}\sum_{v\in \sum_{0}\times\{0\}}n_{v}+\sum_{v\in \Sigma_{0}\times (S_{i'}\setminus\{0\})}n_{v}+\sum_{v\in \Sigma_{1}\times((z-S_{j'})\setminus\{z\})}n_{v}+\sum_{v\in\Sigma_{1}\times\{z\}}n_{v},\text{ if } h\in\Sigma_{0},z\in S_{i'},\\
                        \sum_{v\in\Sigma_{0}\times\{0\}}n_{v}+\sum_{v\in\Sigma_{0}\times(S_{j'}\setminus\{0\})}n_{v}
                        +\sum_{v\in\Sigma_{1}\times((z-S_{i'})\setminus\{z\})}n_{v}+\sum_{v\in\Sigma_{1}\times\{z\}}n_{v}, \text{ if }h\in \Sigma_{1},z\in S_{j'}.\end{cases} \]
which, by (\ref{eq3.0}), in both cases gives $\alpha=\frac{m}{2}p+(\frac{m}{2})^{2}(p-1)p+(\frac{m}{2})^{2}p+(\frac{m}{2})^{2}p=(\frac{m}{2})^{2}p(p+3)$.
\par
Now suppose that $(h,z)\notin S_{i'j'}$. We have \begin{eqnarray}\label{eq3.1}
(h,z)-S_{i'j'} & = & \begin{cases} \Sigma_{0}\times(z-S_{i'})\cup\Sigma_{1}\times S_{j'}, \text{ if } h\in \Sigma_{0},z\in S_{j'},\\
                                   \Sigma_{1}\times S_{i'}\cup \Sigma_{0}\times (z-S_{j'}), \text{ if } h\in \Sigma_{1},z\in S_{i'},\\
                                   \Sigma_{0}\times(z-S_{i'})\cup \Sigma_{1}\times(z-S_{j'}),\text{ if } h\in \Sigma_{0},z\notin S_{i'}\cup S_{j'},\\
                                   \Sigma_{1}\times(z-S_{i'})\cup\Sigma_{0}\times(z-S_{j'})\text{ if } h\in\Sigma_{1},z\notin S_{i'}\cup
S_{j'}.\end{cases}\end{eqnarray} Using Lemma \ref{le3.0}, it is easy to see that $(h,w)$, where $h\in \Sigma_{0}$, $w\in (z-S_{i'})\cap S_{j'}$ and $z\notin S_{i'}$, appears $\frac{m}{2}p$ times in $\Delta(S_{i'j'})$, and each member of $\Sigma_{1}\times S_{j'}$ appears $m$ times. Similarly, $(0,w)$, where $w\in S_{i'}\cap(z-S_{j'})$ and $z\notin S_{j'}$, appears $(\frac{m}{2})^{2}p$ times in $\Delta(S_{i'j'})$, and each member of $\Sigma_{1}\times S_{i'}$ appears $m$ times.
\par
We need to consider the two cases $h\in \Sigma_{0},z\notin S_{i'}\cup S_{j'}$ and $h\in \Sigma_{1},z\notin S_{i'}\cup S_{j'}$. Notice \begin{eqnarray*} \underline{\Delta(S_{i'j'})}=\underline{S_{i'j'}S_{i'j'}^{-1}} & = & (\underline{\Sigma_{0}\times S_{i'}\cup \Sigma_{1}\times S_{j'}})(\underline{\Sigma_{0}\times S_{i'}\cup \Sigma_{1}\times S_{j'}})^{-1} \\
& = & \frac{m}{2}(\Sigma_{0},\underline{S_{i'}S_{i'}^{-1}})+\frac{m}{2}(\Sigma_{1},\underline{S_{i'}S_{j'}^{-1}})
+\frac{m}{2}(\Sigma_{1},\underline{S_{j'}S_{i'}^{-1}})+\frac{m}{2}(\Sigma_{0},\underline{S_{j'}S_{j'}^{-1}}) \\
& = & \frac{m}{2}(\Sigma_{0},p\underline{(S_{i'}\cup S_{j'})})+(\Sigma_{1},\underline{S_{i'}S_{j'}^{-1}}+\underline{S_{j'}S_{i'}^{-1}})\end{eqnarray*}

Since $C_{i}\cap C_{j}=\emptyset$ for $i\neq j$ and $f=\frac{p^{2}-1}{p+1}$ is even, we have by Lemma \ref{le2.0} that \begin{equation*}
\underline{C_{i}C_{j}^{-1}}=\underline{C_{i}C_{j}}=\sum_{l=0}^{p}(j-i,l-i)\underline{C_{l}}=\sum_{l\neq i,j}(j-i,l-i)\underline{C_{l}}. \end{equation*} Thus, by using (\ref{eq3.1}) and Lemma \ref{le3.1}, we can see that in the case where $h\in \Sigma_{0}$ and $z\notin S_{i'}\cup S_{j'}$, each member of $\Sigma_{1}\times (z-S_{j'})$ appears $m$ times in $\Delta(S_{i'j'})$, and a member $(h,w)$, where $h\in \Sigma_{0}$ and $w\in (z-S_{i'})\cap S_{j'}$, appears $\frac{m}{2}p$ times. Similarly, in the case where $h\in \Sigma_{1}$ and $w\notin S_{i'}\cup S_{j'}$, each member of $\Sigma_{1}\times (z-S_{i'})$ appears $m$ times in $\Delta(S_{i'j'})$, and a member $(h,w)$, where $h\in \Sigma_{0}$ and $w\in S_{i'}\cap(z-S_{j'})$, appears $\frac{m}{2}p$ times. Thus we have $\beta=\sum_{(h',z')\in S_{i'j'}}\delta_{S_{i'j'}}((h,z)-(h',z'))=(\frac{m}{2})^{2}p+m(\frac{m}{2})p=\frac{3}{4}m^{2}p$.
\end{proof}
We give another construction of partial geometric difference sets in products of Abelian groups.

\begin{theorem}\label{th3.1} Let $p$ be a prime and let $C_{i}$, for $i=0,1,...,p$, denote the cyclotomic classes of order $p+1$ in $\mathbb{F}_{p^{2}}$. Let $S_{i}=C_{i}\cup\{0\}$ for $i=0,1,...,p$. Let $i',j'\in \{0,1,...,p\}$ be fixed with $i'\neq j'$. Let $\{0,3\},\{1,4\}\subset \mathbb{Z}_{6}$. Then $S_{i'j'}=\{0,3\}\times S_{i'}\cup\{1,4\}\times S_{j'}$ is a partial geometric difference set in $(\mathbb{Z}_{6}\times \mathbb{F}_{p^{2}},+)$ with parameters $(6p^{2},4p;20p,8p)$.
\end{theorem}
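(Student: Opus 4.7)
The plan is to run the same group-ring expansion that underlies the proof of Theorem~\ref{th3.0}, with the subgroup $H := \{0,3\}$ of $\mathbb{Z}_6$ and its coset $1+H$ now playing the role of $\Sigma_0,\Sigma_1$. Writing $A := \underline{S_{i'j'}} = \underline{H}\otimes\underline{S_{i'}} + \underline{1+H}\otimes\underline{S_{j'}}$ in the group ring $\mathbb{Z}[\mathbb{Z}_6\times\mathbb{F}_{p^2}]$, the partial geometric difference set condition with parameters $(\alpha,\beta)$ is equivalent to the identity $A\cdot A^{-1}\cdot A = (\alpha-\beta)A + \beta\,\underline{\mathbb{Z}_6\times\mathbb{F}_{p^2}}$, so the whole proof reduces to evaluating this triple product.

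Three structural observations do all the work. First, each $S_i = C_i\cup\{0\}$ equals $\gamma^i\mathbb{F}_p$ and is therefore a one-dimensional $\mathbb{F}_p$-subspace of $\mathbb{F}_{p^2}$; in particular $-S_i = S_i$ and $\underline{S_i}^2 = p\,\underline{S_i}$. Second, for $i\neq j$ the subspaces $S_i,S_j$ are complementary in $\mathbb{F}_{p^2}$, giving a direct-sum decomposition $\mathbb{F}_{p^2}=S_i\oplus S_j$; uniqueness of the representation $z=a+b$ with $a\in S_i$, $b\in S_j$ yields the crucial identity $\underline{S_i}\,\underline{S_j} = \underline{\mathbb{F}_{p^2}}$ for $i\neq j$, which lets us bypass Lemma~\ref{le3.0} and Lemma~\ref{le2.0} entirely. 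Third, the three cosets $H,1+H,2+H$ partition $\mathbb{Z}_6$ and obey the multiplication rules $\underline{H}^2 = 2\underline{H}$, $\underline{H}\cdot\underline{1+H} = 2\underline{1+H}$, $\underline{H}\cdot\underline{2+H} = 2\underline{2+H}$, $(\underline{1+H})^2 = 2\underline{2+H}$, $(\underline{2+H})^2 = 2\underline{1+H}$, and $\underline{1+H}\cdot\underline{2+H} = 2\underline{H}$; combined with $-H = H$ and $-(1+H) = 2+H$, this gives $A^{-1} = \underline{H}\otimes\underline{S_{i'}} + \underline{2+H}\otimes\underline{S_{j'}}$.

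Armed with these, the evaluation of $A^2 A^{-1}$ is mechanical. Expanding $A^2$ using the tables above produces three tensor pieces supported on $H\times S_{i'}$, $(1+H)\times\mathbb{F}_{p^2}$, and $(2+H)\times S_{j'}$. Multiplying each by the two summands of $A^{-1}$ yields six further tensor pieces; each $\mathbb{Z}_6$-side factor is one of the six products listed above, and each $\mathbb{F}_{p^2}$-side factor is $p\,\underline{S_{i'}}$, $p\,\underline{S_{j'}}$, or $\underline{\mathbb{F}_{p^2}}$. Collecting shows that the pieces supported on all of $\mathbb{Z}_6\times\mathbb{F}_{p^2}$ reassemble into a multiple of $\underline{\mathbb{Z}_6\times\mathbb{F}_{p^2}}$ (using $\underline{H}+\underline{1+H}+\underline{2+H} = \underline{\mathbb{Z}_6}$), while the remaining pieces reassemble into a multiple of $\underline{S_{i'j'}} = A$; reading off the two coefficients gives $\alpha$ and $\beta$.

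There is no real obstacle beyond bookkeeping once the three observations are in hand. The one point requiring care is tracking which of $1+H$ and $2+H$ appears after each $\mathbb{Z}_6$-side product, since mismatching the cosets will prevent the ``mixed'' terms (whose $\mathbb{F}_{p^2}$-side is $\underline{\mathbb{F}_{p^2}}$) from reassembling into $\underline{\mathbb{Z}_6\times\mathbb{F}_{p^2}}$ and will corrupt the coefficient of $\underline{S_{i'j'}}$.
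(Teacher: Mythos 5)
Your three structural observations are all correct: each $S_i=\gamma^i\mathbb{F}_p$ is an additive subgroup of order $p$, $\underline{S_{i'}}\,\underline{S_{j'}}=\underline{\mathbb{F}_{p^2}}$ for $i'\neq j'$ because the two lines through the origin are complementary, your coset multiplication table for $H=\{0,3\}$ in $\mathbb{Z}_6$ is right, and the reformulation of the defining condition as $A\,A^{-1}A=(\alpha-\beta)A+\beta\,\underline{\mathbb{Z}_6\times\mathbb{F}_{p^2}}$ is also correct. The gap is in the one step you wave at, ``reading off the two coefficients.'' Carrying out your own expansion gives
\[
A^2=2p\,\underline{H}\otimes\underline{S_{i'}}+4\,\underline{1+H}\otimes\underline{\mathbb{F}_{p^2}}+2p\,\underline{2+H}\otimes\underline{S_{j'}},
\]
and multiplying by $A^{-1}=\underline{H}\otimes\underline{S_{i'}}+\underline{2+H}\otimes\underline{S_{j'}}$ and collecting terms yields
\[
A^2A^{-1}=4p^2\,A+8p\,\underline{\mathbb{Z}_6\times\mathbb{F}_{p^2}}.
\]
So $\beta=8p$ as claimed, but $\alpha=4p^2+8p=4p(p+2)$, which equals the stated $20p$ only when $p=3$. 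Your argument therefore cannot terminate in the parameters of the theorem for general $p$: the set is indeed a partial geometric difference set, but with $\alpha=4p(p+2)$.

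The discrepancy is not an artifact of your method; the paper's own proof contains the slip. It evaluates $\sum_{v\in\{0,3\}\times(S_{i'}\setminus\{0\})}n_v$ as $8p$, but that index set has $2(p-1)$ elements, each with $n_v=2p$ (two choices of equal $\mathbb{Z}_6$-coordinates times $\delta_{S_{i'}}(w)=p$, with no contribution from $S_{j'}$ since $w\notin S_{j'}$), so the sum is $4p(p-1)$ and the total is $8p+4p(p-1)+4p=4p(p+2)$. As a sanity check, the same machinery applied to Theorem \ref{th3.0}, where $\Sigma_0$ has index $2$ in $\mathbb{Z}_m$ and there is no third coset, reproduces $(\frac{m}{2})^2p(p+3)$ and $\frac{3}{4}m^2p$ exactly; this confirms both your method and the diagnosis that the index-$3$ situation of Theorem \ref{th3.1} is where the count went wrong. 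Apart from this, your route is genuinely cleaner than the paper's: the identity $\underline{S_{i'}}\,\underline{S_{j'}}=\underline{\mathbb{F}_{p^2}}$ replaces all of the cyclotomic-number bookkeeping of Lemmas \ref{le2.0}, \ref{le3.1} and \ref{le3.0}, and the group-ring identity forces the two constants simultaneously rather than case by case. To salvage a complete proof you should finish the computation above and state the corrected value of $\alpha$.
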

\begin{proof} We have already established in (\ref{eq3.2}) that \[ \delta_{S_{i}}(z)=\begin{cases} p, \text{ if } z\in S_{i},\\
                                                                                                   0, \text{ otherwise}.\end{cases}\] Now suppose
that $(h,z)\in S_{i'j'}$. Then we have \begin{eqnarray}\label{eq3.3}
(h,z)-S_{i'j'} & = & \begin{cases} \{0,3\}\times S_{i'}\cup \{2,5\}\times(z-S_{j'}), \text{ if } h\in \{0,3\},z\in S_{i'}, \\
                                   \{1,4\}\times(z-S_{i'})\cup \{0,3\}\times S_{j'}, \text{ if } h\in \{1,4\},z\in S_{j'}.\end{cases}\end{eqnarray}
If we denote the number of occurrences of $u$ in $\Delta(S_{i'j'})$ by $n_{u}$ then, using (\ref{eq3.3}), we have \begin{eqnarray*} \alpha=\sum_{(h',z')\in S_{i'j'}}\delta_{S_{i'j'}}((h,z)-(h',z'))& = & \sum_{v\in\{0,3\}\times\{0\}}n_{v}+\sum_{v\in\{0,3\}\times(S_{i'}\setminus\{0\})}n_{v}+\sum_{v\in\{2,5\}\times(z-S_{j'})}n_{v}\\
& = & 8p+8p+4p \\
& = & 20p.\end{eqnarray*} There are seven expressions for $(h,z)-S_{i'j'}$ depending on whether $h$ is contained in $\{0,3\},\{1,4\}$ or $\{2,5\}$, and whether $z$ is contained in $S_{i'}$ or $S_{j'}$, or contained in neither. These are simple to compute and we do not list them. Using Lemma \ref{le3.0} it is easy to see that $(h,w)$, where $h\in \{0,3\}$ and $w\in (z-S_{j'})$ for $z\notin S_{i'}$, appears $2p$ times in $\Delta(S_{i'j'})$, and each member of $\{2,5\}\times S_{j'}$ appears $2p$ times. The cases where $h\in \{1,4\},z\in S_{i'}$, where $h\in\{2,5\},z\in S_{j'}$, and where $h\in\{2,5\},z\in S_{i'}$, are similar to the previous case. We need to consider the cases where $z\notin S_{i'}\cup S_{j'}$. Notice \begin{eqnarray*} \underline{\Delta(S_{i'j'})}=\underline{S_{i'j'}S_{i'j'}^{-1}}
& = & (\underline{\{0,3\}\times S_{i'}\cup \{1,4\}\times S_{j'}})(\underline{\{0,3\}\times S_{i'}\cup \{1,4\}\times S_{j'}})^{-1} \\
& = & 2(\{0,3\},\underline{S_{i'}S_{i'}^{-1}})+2(\{2,5\},\underline{S_{i'}S_{j'}^{-1}})+2(\{1,4\},\underline{S_{j'}S_{i'}^{-1}})+
2(\{0,3\},\underline{S_{j'}S_{j'}^{-1}}) \\
& = & 2((\{0,3\},p\underline{(S_{i'}\cup S_{j'})})+(\{2,5\},\underline{S_{i'}S_{j'}^{-1}})+(\{1,4\},\underline{S_{j'}S_{i'}^{-1}}))\end{eqnarray*}

Since $C_{i}\cap C_{j}=\emptyset$ for $i\neq j$ and $f=\frac{p^{2}-1}{p+1}$ is even, we have by Lemma \ref{le2.0} that \begin{equation*}
\underline{C_{i}C_{j}^{-1}}=\underline{C_{i}C_{j}}=\sum_{l=0}^{p}(j-i,l-i)\underline{C_{l}}=\sum_{l\neq i,j}(j-i,l-i)\underline{C_{l}}. \end{equation*} Thus, by using the expressions for $(h,z)-S_{i'j'}$ and Lemma \ref{le3.1}, we can see that, in the case where $h\in\{0,3\}$ and $z\notin S_{i'}\cup S_{j'}$, each member of $\{2,5\}\times(z-S_{j'})$ appears twice in $\Delta(S_{i'j'})$, and a member $(h,w)$, where $h\in\{0,3\}$ and $w\in(z-S_{i'})\cap S_{j'}$, appears $2p$ times. The cases where $h\in\{1,4\},z\notin S_{i'}\cup S_{j'}$ and where $h\in\{2,5\},z\notin S_{i'}\cup S_{j'}$ are similar. Thus we can conclude that $\beta=\sum_{(h',z')\in S_{i'j'}}\delta_{S_{i'j'}}((h,z)-(h',z'))=8p$.
\end{proof}
We next construct partial geometric designs from planar functions. For a more detailed introduction to planar functions the reader is referred to \cite{ACH} and \cite{CUN}.

Let $(A,+)$ and $(B,+)$ be Abelian groups of order $n$ and $m$ respectively. Let $f:A\rightarrow B$ be a function. One measure of the nonlinearity of $f$ is given by $P_{f}=\max\limits_{0\neq a\in A}\max\limits_{b\in B}Pr(f(x+a)-f(x)=b)$, where $Pr(E)$ denotes the probability of the event $E$. The function $f$ is said to have {\it perfect nonlinearity} if $P_{f}=\frac{1}{m}$.
The following lemma gives many examples of perfect nonlinear functions in finite fields.
\begin{lemma} {\rm \cite{ACH}} The power function $x^{s}$ from $\mathbb{F}_{p^{m}}$ to $\mathbb{F}_{p^{m}}$, where $p$ is an odd prime, has perfect nonlinearity $P_{f}=\frac{1}{p^{m}}$ for the following values of $s$: \begin{enumerate}
\item $s=2$,
\item $s=p^{k}+1$, where $m/gcd(m,k)$ is odd,
\item $s=(3^{k}+1)/2$, where $p=3$, $k$ is odd, and $gcd(m,k)=1$.\end{enumerate}
\end{lemma}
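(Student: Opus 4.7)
The plan is to reformulate perfect nonlinearity in terms of the difference function $D_a f(x) = f(x+a)-f(x)$. Since the level sets $\{x : D_a f(x)=b\}$ partition $\mathbb{F}_{p^m}$ as $b$ ranges over $\mathbb{F}_{p^m}$, the condition $P_f = 1/p^m$ is equivalent to the statement that for every $a\in\mathbb{F}_{p^m}^{\ast}$ the map $D_a f$ is a bijection. So in each of the three cases I want to verify that $(x+a)^s - x^s$ is a permutation of $\mathbb{F}_{p^m}$ for every nonzero $a$.

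Case~1 is immediate: $(x+a)^2 - x^2 = 2ax + a^2$ is affine with nonzero slope $2a$ because $p$ is odd. For Case~2, I would exploit that $y\mapsto y^{p^k}$ is the $k$th iterate of Frobenius, so $(x+a)^{p^k+1}-x^{p^k+1} = (x^{p^k}+a^{p^k})(x+a)-x^{p^k+1} = ax^{p^k}+a^{p^k}x+a^{p^k+1}$. This is an affine map whose linear part $L(x)=ax^{p^k}+a^{p^k}x$ is $\mathbb{F}_p$-linear, so it is a bijection iff $\ker L=\{0\}$. Any nonzero $x\in\ker L$ would force $(x/a)^{p^k-1}=-1$, i.e.\ $-1$ would lie in the image of the homomorphism $y\mapsto y^{p^k-1}$ on $\mathbb{F}_{p^m}^{\ast}$. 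Setting $d=\gcd(k,m)$ and using $\gcd(p^k-1,p^m-1)=p^d-1$, a discrete-logarithm computation reduces solvability to $(p^d-1)\mid(p^m-1)/2$. But $(p^m-1)/(p^d-1)=1+p^d+\cdots+p^{(m/d-1)d}$ is a sum of $m/d$ odd numbers, so when $m/d$ is odd this quotient is odd, giving $v_2(p^m-1)=v_2(p^d-1)$, and the divisibility fails. Hence $\ker L=\{0\}$ and $D_a f$ is a permutation.

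Case~3 is the Coulter--Matthews result and is the genuine obstacle. The tempting strategy of squaring $D_a f$ to pass from $s=(3^k+1)/2$ to $2s=3^k+1$ and reuse Case~2 fails for two reasons: squaring is two-to-one on $\mathbb{F}_{3^m}^{\ast}$ and here the hypothesis is $\gcd(m,k)=1$ with $k$ odd rather than $m/\gcd(m,k)$ odd, so Case~2 does not directly deliver planarity of the exponent $3^k+1$ unless $m$ itself is odd. The approach I would take is to assume $(x+a)^s-x^s=(y+a)^s-y^s$ with $x\neq y$, rescale to $a=1$, square both sides and apply the characteristic-$3$ identity $(u-v)^2 = u^2+uv+v^2$ to convert the equation into a relation among $(x+1)x$, $(y+1)y$ and their $3^k$-th powers; the goal is then to factor that relation into pieces controlled by Frobenius and to show that the resulting sign ambiguity is incompatible with $k$ odd and $\gcd(m,k)=1$. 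The main obstacle is precisely this sign bookkeeping: squaring destroys the quadratic character $\chi(x)=x^{(3^m-1)/2}$, and one must show that $\chi$ interacts correctly with the translation $x\mapsto x+1$. Since this case is a delicate classical result, in the write-up I would give Cases~1 and~2 in full and invoke \cite{ACH} for Case~3.
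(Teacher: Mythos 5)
The paper offers no proof of this lemma at all: it is stated as a known result and attributed entirely to \cite{ACH}, so there is nothing internal to compare your argument against. What you supply for Cases~1 and~2 is correct and self-contained. The reduction of $P_f=1/p^m$ to ``$D_af$ is a bijection for every $a\neq 0$'' is right, since the fibres of $D_af$ partition a set of size $p^m$ into $p^m$ classes. Case~1 is the trivial affine computation. In Case~2 your expansion $D_af(x)=ax^{p^k}+a^{p^k}x+a^{p^k+1}$ is correct, the kernel condition $(x/a)^{p^k-1}=-1$ is correct, and the parity argument is sound: $-1$ lies in the image of $y\mapsto y^{p^k-1}$ on $\mathbb{F}_{p^m}^{\ast}$ iff $2$ divides $(p^m-1)/(p^d-1)=1+p^d+\cdots+p^{(m/d-1)d}$, a sum of $m/d$ odd terms, which is odd precisely when $m/d$ is odd. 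So the hypothesis kills the kernel and $D_af$ is a permutation. For Case~3 you correctly diagnose that this is the Coulter--Matthews theorem, that squaring to reduce to exponent $3^k+1$ does not work (both because squaring is two-to-one and because the hypothesis $\gcd(m,k)=1$ with $k$ odd is not the Case~2 hypothesis), and you defer to the literature; that is an honest and appropriate resolution, and it matches what the paper itself does for the whole lemma. Net effect: your write-up proves two of the three cases from scratch where the paper proves none, and cites out the genuinely hard third case just as the paper does.
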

We will use the following lemma.
\begin{lemma}\label{le3.2} {\rm \cite{ACH}} Let $f$ be a function from an Abelian group $(A,+)$ of order $n$ to another Abelian group $(B,+)$ of order $n$ with perfect nonlinearity $P_{f}=\frac{1}{n}$. Define $C_{b}=\{x\in A\mid f(x)=b\}$ and $C=\bigcup_{b\in B}\{b\}\times C_{b}\subset B\times A$. Then \[|C\cap(C+(w_{1},w_{2}))|=\begin{cases} n, \text{ if } (w_{1},w_{2})=(0,0), \\
                                         0, \text{ if } w_{1}\neq 0, w_{2}=0, \\
                                         1, \text{ otherwise}.\end{cases}\]
\end{lemma}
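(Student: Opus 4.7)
The plan is to recognize that $C$ is essentially the graph of $f$, namely $C=\{(f(x),x):x\in A\}$, and then translate $|C\cap(C+(w_1,w_2))|$ into a count of solutions of a difference equation for $f$. The three branches of the stated formula will then fall out depending on whether $w_2$ and $w_1$ vanish.

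First I would verify that $(b,y)\in C$ iff $b=f(y)$, which is immediate from the definitions $C_b=\{x\in A:f(x)=b\}$ and $C=\bigcup_{b\in B}\{b\}\times C_b$; in particular $|C|=n$. Next, $(b,y)\in C+(w_1,w_2)$ iff $(b-w_1,y-w_2)\in C$ iff $b-w_1=f(y-w_2)$. Combining, $(b,y)\in C\cap(C+(w_1,w_2))$ iff $f(y)=f(y-w_2)+w_1$, with $b$ determined by $y$. Thus the intersection has size $N(w_1,w_2):=|\{y\in A:f(y)-f(y-w_2)=w_1\}|$.

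Now I dispatch the three cases. If $(w_1,w_2)=(0,0)$, the equation reads $0=0$, so every $y\in A$ contributes and $N=n$. If $w_2=0$ and $w_1\neq 0$, the equation becomes $0=w_1\neq 0$, giving $N=0$. If $w_2\neq 0$, the substitution $z=y-w_2$ gives $N=n\cdot Pr(f(x+w_2)-f(x)=w_1)$. The key (and only non-mechanical) step is to show that perfect nonlinearity $P_f=1/n$ together with $|B|=n$ forces this probability to equal exactly $1/n$ for every $w_1$: summing $Pr(f(x+w_2)-f(x)=b)$ over $b\in B$ yields $1$, each summand is at most $P_f=1/n$, and there are exactly $n$ summands, so every summand must equal $1/n$. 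Hence $N=1$, and the three cases together give the claim.
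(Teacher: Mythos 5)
Your proof is correct: the identification of $C$ with the graph $\{(f(x),x):x\in A\}$, the reduction of $|C\cap(C+(w_1,w_2))|$ to counting solutions of $f(y)-f(y-w_2)=w_1$, and the averaging argument showing that $P_f=\tfrac{1}{n}$ forces every difference probability to equal exactly $\tfrac{1}{n}$ are all sound. The paper itself states this lemma as a cited result from the planar-functions literature and gives no proof, so there is nothing to compare against; your argument is the standard one and fills that gap correctly.
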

The following is a construction.
\begin{theorem}\label{th3.2} Let $f$ be a function from an Abelian group $(A,+)$ of order $n$ to another Abelian group $(B,+)$ of order $n$ with perfect nonlinearity $P_{f}=\frac{1}{n}$. Define $C_{b}=\{x\in A\mid f(x)=b\}$ and $C=\bigcup_{b\in B}\{b\}\times C_{b}\subset B\times A$. Then $C$ is a partial geometric difference set in $A\times B$ with parameters $(n^{2},n;2n-1,n-1)$.
\end{theorem}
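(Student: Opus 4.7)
The plan is to reduce the theorem directly to Lemma \ref{le3.2} by rewriting $\delta_C$ in terms of intersections of $C$ with its translates. For any $w=(w_1,w_2)\in B\times A$ we have
\[
\delta_C(w)=|\{(c,c')\in C\times C : c-c'=w\}|=|C\cap(C-w)|,
\]
so Lemma \ref{le3.2}, applied to the translate by $-w$, yields the values $\delta_C(0,0)=n$, $\delta_C(w_1,0)=0$ for $w_1\neq 0$, and $\delta_C(w_1,w_2)=1$ in all remaining cases.

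With this in hand, the sum $\sum_{y\in C}\delta_C(x-y)$ is computed by a two-case analysis on whether $x=(b_0,a_0)\in C$ or not. The key observation is that, since $C=\bigcup_{b\in B}\{b\}\times C_b$ is the graph of the function $f$, distinct elements $(b_1,a_1),(b_2,a_2)\in C$ must have $a_1\neq a_2$: indeed $a_1=a_2$ forces $b_1=f(a_1)=f(a_2)=b_2$. Consequently, for $y$ ranging over $C$, the second coordinate of $x-y$ vanishes for at most one value of $y$.

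First I would handle $x\in C$. The term $y=x$ contributes $\delta_C(0,0)=n$, and for every other $y\in C$ the difference $x-y$ has nonzero second coordinate (by the observation above), so contributes exactly $1$. This gives $\alpha=n+(n-1)=2n-1$. Then for $x=(b_0,a_0)\notin C$, i.e.\ $f(a_0)\neq b_0$, the unique $y\in C$ with the same second coordinate as $x$ is $y=(f(a_0),a_0)$; for this $y$ the difference $x-y=(b_0-f(a_0),0)$ has nonzero first coordinate and zero second coordinate, so contributes $0$. Each of the remaining $n-1$ elements of $C$ contributes $1$, yielding $\beta=n-1$.

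There is no real obstacle here: once Lemma \ref{le3.2} is translated into values of $\delta_C$, the only subtle point is the observation that the projection $C\to A$ (the ``graph of $f$'' viewpoint) is injective, which is what forces the second coordinate of $x-y$ to be nonzero outside of a single exceptional $y$. After that the two sums are immediate and give exactly the claimed parameters $(v,k;\alpha,\beta)=(n^2,n;2n-1,n-1)$.
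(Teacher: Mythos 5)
Your proof is correct and follows essentially the same route as the paper: both translate Lemma \ref{le3.2} into the values of $\delta_{C}$ (equivalently the multiplicities $n_{u}$ in $\Delta(C)$) and then split the sum over $y\in C$ according to whether $x-y$ has zero second coordinate, using the injectivity of the projection $C\to A$ to see that this happens for exactly one $y$. The two-case computation giving $\alpha=n+(n-1)=2n-1$ and $\beta=0+(n-1)=n-1$ matches the paper's argument.
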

\begin{proof} Suppose $(h,z)\in C$. Then we have \begin{eqnarray*} (h,z)-C & = & \bigcup_{b\in B}\{h-b\}\times (z-C_{b}) \\
                                                                           & = & \bigcup_{b\in B}\{h-b\}\times \{z-x\mid z,x\in A,f(x)=b\}. \end{eqnarray*}
Denote the number of occurrences of $u$ in $\Delta(C)$ by $n_{u}$. Define $V_{1}=\bigcup_{b\in B\setminus\{h\}}\{h-b\}\times\{0\}$ and
$V_{2}=\bigcup_{b\in B}\{h-b\}\times\{z-x\mid x,z\in A,z\neq x,f(x)=b\}$. Then, using Lemma \ref{le3.2}, we have \begin{eqnarray*}
\sum_{(h',z')\in C}\delta((h,z)-(h',z')) & = & n_{(0,0)}+\sum_{v\in V_{1}}n_{v}+\sum_{v\in V_{2}}n_{v} \\
                                         & = & n+0+(n-1)\\
                                         & = & 2n-1.\end{eqnarray*}
Now suppose $(h,z)\notin C$. Define $U=\bigcup_{b\in B\setminus\{f(0)\}}\{h-b\}\times \{z-x\mid x\in A,f(x)=b\}$. Then, using Lemma \ref{le3.2}, we have \begin{eqnarray*} \sum_{(h',z')\in C}\delta((h,z)-(h',z')) & = & n_{(h-f(0),0)}+\sum_{v\in U}n_{v} \\
                                                           & = & 0 + (n-1) \\
                                                           & = & n-1.\end{eqnarray*}
\end{proof}
\begin{corollary} Let $f(x)=x^{s}$ be a function from $\mathbb{F}_{p^{m}}$ to $\mathbb{F}_{p^{m}}$, where $p$ is an odd prime. Define $C_{b}=\{x\in \mathbb{F}_{p^{m}}\mid f(x)=b\}$ and $C=\bigcup_{b\in \mathbb{F}_{p^{m}}}\{b\}\times C_{b}\subset \mathbb{F}_{p^{m}}\times \mathbb{F}_{p^{m}}$. If: \begin{enumerate}
\item $s=2$,
\item $s=p^{k}+1$, where $m/gcd(m,k)$ is odd, or
\item $s=(3^{k}+1)/2$, where $p=3$, $k$ is odd, and $gcd(m,k)=1$.\end{enumerate} Then $(\mathbb{F}_{p^{m}}\times \mathbb{F}_{p^{m}},Dev(C))$ is a partial geometric difference set with parameters $(p^{2m},p^{m};2p^{m}-1,p^{m}-1)$.
\end{corollary}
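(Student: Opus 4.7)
The plan is to obtain the corollary as an immediate specialization of Theorem \ref{th3.2}, using the preceding lemma to verify that the perfect nonlinearity hypothesis is met for the three listed exponents. First I would set $A = B = \mathbb{F}_{p^{m}}$, so both groups have order $n = p^{m}$, and note that the power function $f(x) = x^{s}$ is a map between them. For each of the three choices of $s$ (namely $s = 2$; $s = p^{k}+1$ with $m/\gcd(m,k)$ odd; and $s = (3^{k}+1)/2$ with $p = 3$, $k$ odd, $\gcd(m,k) = 1$), the unnamed lemma preceding Lemma \ref{le3.2} guarantees that $P_{f} = 1/p^{m} = 1/n$, i.e., $f$ has perfect nonlinearity.

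Next I would form the fibres $C_{b} = \{x \in \mathbb{F}_{p^{m}} \mid f(x) = b\}$ and the graph-like set $C = \bigcup_{b \in \mathbb{F}_{p^{m}}} \{b\} \times C_{b}$ exactly as in the hypothesis of Theorem \ref{th3.2}. Since all of the hypotheses of that theorem are now verified with $n = p^{m}$, Theorem \ref{th3.2} applies directly and yields that $C$ is a partial geometric difference set in $\mathbb{F}_{p^{m}} \times \mathbb{F}_{p^{m}}$ with parameters $(n^{2}, n; 2n-1, n-1) = (p^{2m}, p^{m}; 2p^{m} - 1, p^{m} - 1)$. Translating from the difference set to the development $Dev(C)$ (which is the meaning intended in the statement) is then the standard correspondence recalled at the end of Section \ref{sec2}: a partial geometric difference set gives rise to a partial geometric design on its development, with the same $\alpha, \beta$ parameters.

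Because the proof is purely a matter of citation-and-substitution, there is no real combinatorial obstacle; the only care needed is bookkeeping. In particular, I would make explicit that the ambient group for the difference set structure is $\mathbb{F}_{p^{m}} \times \mathbb{F}_{p^{m}}$ (of order $n^{2} = p^{2m}$), that $|C| = n = p^{m}$ since $f$ is a function so the fibres partition $\mathbb{F}_{p^{m}}$, and that the parameters $\alpha = 2n - 1$ and $\beta = n - 1$ inherited from Theorem \ref{th3.2} become exactly the advertised $2p^{m} - 1$ and $p^{m} - 1$. Thus the corollary is established without requiring any new computation beyond invoking the two cited results.
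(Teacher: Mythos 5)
Your proposal is correct and matches the paper's (implicit) argument exactly: the corollary is stated without proof precisely because it follows by verifying the perfect nonlinearity hypothesis via the cited lemma on power functions and then substituting $n = p^{m}$ into Theorem \ref{th3.2}. Your additional bookkeeping remarks (fibres partition the field, ambient group of order $p^{2m}$) are accurate and harmless.
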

\begin{remark} Interestingly, the partial geometric difference sets constructed in Theorem \ref{th3.2} are almost difference sets {\rm \cite{ACH}} and so correspond to planar $2$-adesigns (see Section \ref{sec5}).
\end{remark}
Nowak et al., in \cite{KN}, constructed partial geometric difference families in groups $G=\mathbb{Z}_{n}$ where $n=4l$ for some positive integer $l$. We close this section by further generalizing this idea. The proof is a simple counting exercise, and so is omitted.
\begin{theorem}\label{th3.3} Let $G=\mathbb{Z}_{2}\times \mathbb{Z}_{n}$ where $n=4l$ for some positive integer $l$. Let $H=\langle 4 \rangle$ be the unique subgroup of $\mathbb{Z}_{n}$ of order $l$. Define $H+i=\{z+i\mid z\in H\}=\{x\in \mathbb{Z}_{n}\mid x\equiv i ($mod $4)\}$ for $i=0,1,2,3$ (i.e. the cosets of $H$ in $\mathbb{Z}_{n}$).  Then both $\{0\}\times (H\cup(H+1))\cup\{1\}\times(H\cup(H+3))$ and $\{1\}\times (H\cup(H+1))\cup\{0\}\times(H\cup(H+3))$ are partial geometric difference sets in $G$ with parameters $(8l,4l;6l^{2},10l^{2})$.
\end{theorem}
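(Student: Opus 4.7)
The proof parallels Theorems~\ref{th3.0} and~\ref{th3.1}: tabulate $\delta_S$ on all of $G$, and then for each $x\in G$ evaluate $\sum_{y\in S}\delta_S(x-y)$ and check that its value depends only on whether $x\in S$. Writing $A=H\cup(H+1)$ and $B=H\cup(H+3)$, the first set in the statement is $S=\{0\}\times A\cup\{1\}\times B$; the second is handled by swapping $A$ and $B$ throughout. Since $H$ has index $4$ in $\mathbb{Z}_n$ and both $A,B$ are unions of two cosets of $H$, every quantity that enters the computation depends only on which coset $H+i$ a point's second coordinate lies in.

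First I would tabulate, for each coset of $H$, the values of $\delta_A(c)$, $\delta_B(c)$, and the two cross-counts $|\{(x,y)\in A\times B:x-y=c\}|$ and $|\{(x,y)\in B\times A:x-y=c\}|$. The key observation is that for any two cosets $H+i$ and $H+j$ one has $(H+i)-(H+j)=H+(i-j)$, with every element of that coset realised by exactly $l$ ordered pairs, so the entire calculation reduces to a $4\times 4$ bookkeeping table in which the identity $-1\equiv 3\pmod{4}$ is used repeatedly. From this one reads off $\delta_S\bigl((0,c)\bigr)=\delta_A(c)+\delta_B(c)$, which is supported on $H\cup(H+1)\cup(H+3)$ and takes only two nonzero values there, while $\delta_S\bigl((1,c)\bigr)$ turns out to equal the same constant $2l$ for every $c\in\mathbb{Z}_n$.

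Second, for each $x=(a,b)\in G$ I would split
\[
\sum_{y\in S}\delta_S(x-y)=\sum_{c\in A}\delta_S\bigl((a,b-c)\bigr)+\sum_{c\in B}\delta_S\bigl((a-1,b-c)\bigr),
\]
and evaluate the two inner sums using the table. One of them always has first coordinate $1\in\mathbb{Z}_2$, so by the constancy of $\delta_S((1,\cdot))$ it contributes $|A|\cdot 2l=4l^2$ (or $|B|\cdot 2l=4l^2$) independently of $b$; the other has first coordinate $0$ and splits into two coset-sums, each of which collapses to $l$ copies of a single entry of the $\delta_S((0,\cdot))$-table. Running through the eight cases (two values of $a$, four cosets for $b$) shows that the total takes one constant value on all $4l$ points of $S$ and another constant value on all $4l$ points of $G\setminus S$, which is exactly the partial geometric difference set condition with the stated parameters $(8l,4l;6l^{2},10l^{2})$. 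The only real obstacle is keeping the bookkeeping straight --- remembering for instance that $H-1=H+3$ and $(H+1)-(H+3)=H+2$ in $\mathbb{Z}_n$ --- and no deeper structural ingredient is required, which is why the authors dismiss the argument as ``a simple counting exercise''.
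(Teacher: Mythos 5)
Your outline is correct, and it is exactly the counting exercise the paper omits: the coset table for $\delta_S$ (with $\delta_S((1,\cdot))\equiv 2l$ and $\delta_S((0,\cdot))$ taking the values $4l,2l,0,2l$ on $H,H+1,H+2,H+3$ respectively) together with the eight-case evaluation of $\sum_{y\in S}\delta_S(x-y)$ does verify the partial geometric difference set condition. One small point to fix when you write it up: carrying your table through gives the value $10l^2$ at points of $S$ and $6l^2$ at points of $G\setminus S$ (check $l=1$, $S=\{(0,0),(0,1),(1,0),(1,3)\}$ in $\mathbb{Z}_2\times\mathbb{Z}_4$ by hand), so under the convention of Section~2.2, where $\alpha$ is the value taken on $S$, the parameters come out as $(8l,4l;10l^2,6l^2)$; the ordering in the statement appears to be transposed, consistent with the convention clash already flagged in Remark~\ref{re2.0}.
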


We next discuss some new partial geometric difference families.

\section{New Partial Geometric Difference Families}\label{sec4}

We begin with the following construction.
\begin{theorem}\label{th4.1} Let $n=p^{u}$ where $p$ is an odd prime and $u\geq 2$ is an integer. Let $S=\{0,1,...,p^{u-1}-1\}$ and, for $l=0,1,...,p^{u-1}$, define $S_{l}=(pl-1)S=\{0,pl-1,2pl-2,...,(p^{u-1}-1)pl-(p^{u-1}-1)\}$. Then $S=\{S_{l}\mid l=1,2,...,p^{u-1}\}$ is a partial geometric difference family with parameters $(p^{u},p^{u-1},p^{u-1};(p^{u-1}-1)p^{u-1}p^{u-2},p^{u}+(p^{u-1}-1)p^{u-1}p^{u-2})$.
\end{theorem}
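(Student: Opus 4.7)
The plan is to exploit the multiplicative structure of the blocks. Each $S_l=m_l S$ in $\mathbb{Z}_{p^u}$ with $m_l:=pl-1$ a unit, so the map $t\mapsto t^{m_l}$ is a ring automorphism $\phi_{m_l}$ of $\mathbb{Z}[\mathbb{Z}_{p^u}]$ satisfying $\underline{S_l}^2\underline{S_l^{-1}}=\phi_{m_l}(T)$, where $T:=\underline{S}^2\underline{S^{-1}}$. Since $\sum_{y\in S_l}\delta_{S_l}(x-y)$ equals the coefficient of $t^x$ in $\underline{S_l}^2\underline{S_l^{-1}}$, verifying the partial geometric difference family condition reduces to analysing the single group-ring element $A:=\sum_{l=1}^{p^{u-1}}\phi_{m_l}(T)$. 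A key observation is that $\{m_l:1\le l\le p^{u-1}\}$ is exactly the coset $-1+H$ of the unique subgroup $H:=p\mathbb{Z}_{p^u}$ of order $p^{u-1}$, and that this coset is closed under multiplicative inversion in the unit group (if $a\equiv-1\pmod p$ then $a^{-1}\equiv-1\pmod p$).

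First I would compute $T$ explicitly. Because $S$ is the integer interval $\{0,1,\ldots,p^{u-1}-1\}$ and $2p^{u-1}<p^u$, no difference wraps around modulo $p^u$, so $\delta_S(z)=p^{u-1}-|z|$ for $z$ in the symmetric interval of radius $p^{u-1}-1$ about $0$ and $\delta_S(z)=0$ otherwise. Then $T(w)=\sum_{j=0}^{p^{u-1}-1}\delta_S(w-j)$ is a piecewise-quadratic function supported near the origin. The workhorse identity I would then establish is that $T$ has constant sum $p^{3u-4}$ on every coset of $H$: reducing $\underline{S}$ modulo $H$ yields $p^{u-2}\underline{\mathbb{Z}_p}$ (each residue mod $p$ occurs exactly $p^{u-2}$ times in $S$), so $T$ reduces to $p^{3(u-2)}\cdot p^2\,\underline{\mathbb{Z}_p}=p^{3u-4}\underline{\mathbb{Z}_p}$ in $\mathbb{Z}[\mathbb{Z}_{p^u}/H]$.

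To evaluate $A(x)$, I would write $A(x)=\sum_w T(w)\cdot|\{l:m_l w=x\}|$ and perform a case analysis based on $v:=v_p(x)$. If $x$ is a unit, then $|\{l:m_l w=x\}|=1$ for the unique unit $w$ with $xw^{-1}\in -1+H$ (equivalently $w\equiv -x\pmod p$) and $0$ for any other unit $w$; aggregating these contributions over the units $w\equiv -x\pmod p$ collapses via the coset-sum identity from Step~2 to yield a constant value of $A(x)$. For $0<v<u-1$ an analogous weighted coset argument (with multiplicity $p^v$) applies, and for $v\in\{u-1,u\}$ (i.e.\ $x\in p^{u-1}\mathbb{Z}_{p^u}$) the count forces $x=-w$ at a single nonzero $w$, so $A(x)=p^{u-1}T(-x)$, to be computed directly from the explicit small-support values of $T$.

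The main obstacle is the combinatorial bookkeeping in the final matching step: one must describe $\bigcup_l S_l$ explicitly (as $\{m_l j:l\in[1,p^{u-1}],\,j\in S\}$, parameterised by residues modulo $p$ of elements of $\mathbb{Z}_{p^u}$) and verify that the case-by-case values of $A(x)$ from the previous step take the constant $\alpha=(p^{u-1}-1)p^{u-1}p^{u-2}$ on this union and $\beta=p^u+\alpha$ on its complement. The exceptional behaviour at $x$ with $v_p(x)\ge u-1$ is delicate because it depends on the very small-support values of $T$ near the origin, and I expect this final reconciliation, together with the careful identification of $\bigcup_l S_l$, to constitute the bulk of the work.
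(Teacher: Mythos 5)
Your reduction is set up correctly: $\sum_{y\in S_l}\delta_{S_l}(x-y)$ is the coefficient of $x$ in $\phi_{m_l}(T)$ with $T=\underline{S}\,\underline{S}\,\underline{S}^{-1}$; the multipliers $m_l=pl-1$ do run exactly over the coset $-1+p\mathbb{Z}_{p^u}$, which is closed under inversion; and the coset-sum identity $\sum_{w\in c+H}T(w)=p^{3u-4}$ for $H=p\mathbb{Z}_{p^u}$ is correct. This is a genuinely different (and cleaner) route than the paper's, which counts multiplicities in each $\Delta(S_l)$ directly and tallies how often each residue occurs among the blocks. The problem is the step you defer to the end: the ``final reconciliation'' cannot be carried out, because $A(x)=\sum_l\phi_{m_l}(T)(x)$ is not constant on $\bigcup_l S_l$, nor on its complement, and never takes either of the claimed parameter values. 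Your own Step 3 already exposes this. For every unit $x$ the coset argument gives $A(x)=p^{3u-4}$ exactly (and every unit lies in $\bigcup_l S_l$), while $0$ lies in every block and $A(0)=p^{u-1}T(0)=\tfrac12 p^{2u-2}(p^{u-1}+1)$, which equals $p^{3u-4}$ only if $p^{u-2}(2-p)=1$, i.e.\ never. On the complement $\{p^{u-1},2p^{u-1},\dots,(p-1)p^{u-1}\}$ one likewise gets $A(p^{u-1})=p^{u-1}T(-p^{u-1})=0$ but $A(-p^{u-1})=p^{u-1}T(p^{u-1})=\tfrac12 p^{2u-2}(p^{u-1}-1)\neq 0$.

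A concrete check: for $p=3$, $u=2$ the blocks are $S_1=\{0,2,4\}$, $S_2=\{0,1,5\}$, $S_3=\{0,7,8\}$ in $\mathbb{Z}_9$, and $\sum_{l}\sum_{y\in S_l}\delta_{S_l}(x-y)$ takes the values $18,\,9,\,0,\,9$ at $x=0,1,3,6$ respectively, whereas the theorem asserts only the two values $6$ and $15$ occur. So the statement is false as written, and no completion of your plan (or of the paper's count, which asserts that $0$ contributes $p^{u}$ to the sum when it in fact occurs $p^{2u-2}$ times in $\bigsqcup_l\Delta(S_l)$, and which treats the $y=x$ terms as if $x$ lay in $p$ of the blocks rather than $p^{u-2}$ of them) can succeed. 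What your method does prove is the strictly weaker fact that $A$ is constant, equal to $p^{3u-4}$, on the units of $\mathbb{Z}_{p^u}$; if you want to salvage a partial geometric difference family from this construction, the exceptional behaviour at $0$ and at the multiples of $p^{u-1}$ is exactly what has to be repaired.
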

\begin{proof} We will use the following property, which is easily seen to hold: \begin{multline}\label{eq4.1} \text{{\it The members} } \pm(pl-1),\pm(2pl-2),...,\pm(p^{u-1}-1)pl-(p^{u-1}-1) \text{ {\it each appear}}\\ \text{ {\it in the multiset }} \Delta(S_{l}) \text{ {\it with multiplicities} } p^{u-1}-1,p^{u-1}-2,...,1 \text{ {\it respectively}.\quad\quad\quad\quad\quad\quad\quad\quad}\end{multline} Also note that for $s\in S$ we have that $\pm s(pl-1)\equiv\mp(p^{u-1}-s)(pl-1)($mod $p^{u-1})$ for each $l$, $l=0,1,...,p^{u-1}$.
\\
{\bf Claim:} For each $s\in S$, the equation $s(pl-1)\equiv\alpha($mod $p^{u})$ has $p^{u-2}$ solutions $(s,l)$ for $s,l\in\{0,1,...,p^{u-1}\}$.
\\
{\bf Proof of Claim:} Notice if $pl-1\equiv\alpha($mod $p^{u})$ we have $s(p(l+\beta)-1)\equiv\alpha($mod $p^{u})$ if and only if \begin{equation}\label{eq4.0} s\alpha+sp\beta\equiv\alpha({\rm \text{mod }} p^{u}).\end{equation}  We can see that (\ref{eq4.0}) holds if and only if $s\equiv1($mod $p)$. We know that $(s,\beta)=(1,0)$ is a solution. Now set $s=1+p$ and $\beta=pl'+1$ for some $l'\in \{0,...,p^{u-1}\}$. Then we have \begin{eqnarray*}
                          (p+1)(pl-1)+(p+1)(pl'+1)p=pl-1& \Leftrightarrow & p\alpha+(p^{2}l'+p+pl'+1)p=0\\
                          & \Leftrightarrow & p(pl-1)+p^{2}(1+l')+p=0\\
                          & \Leftrightarrow & p^{2}(l+l'+1)=0\\
                          & \Leftrightarrow & l+l'+1\equiv p({\rm \text{mod }} p^{u-1}).\end{eqnarray*} Thus we can choose $\beta=pl'+1$ where
$l'\equiv-l-1($mod $p^{u-1})$ and we have a solution. Since there are $p^{u-1}$ such solutions, the claim is proved.
\\
Thus we have that each element of $\mathbb{Z}_{p^{u}}$ not congruent to $0($mod $p^{u-1})$ appears in $S_{l}$ for $p^{u-2}$ different values of $l$. Since $0$ appears $p^{u}$ times in the multiset $\bigsqcup_{l=1}^{p^{u}-1}\Delta(S_{l})$, and by (\ref{eq4.1}), we must have that if $x\in S_{l}$ for some $l$ then \begin{align*}\alpha=\sum_{l}\sum_{y\in S_{l}}\delta_{S_{l}}(x-y) & =  p^{u}+\sum_{l}\sum_{y\in S_{l},x\neq y}p^{u-2}p^{u-1}&\\
                                                                    & =  p^{u}+(p^{u-1}-1)p^{u-2}p^{u-1}, &(\text{since each } S_{l}\text{ contains }0) \end{align*}
Now notice that if we reduce the elements of $S_{l}$ modulo $p^{u-1}$ we get the set $S=S_{0}$. It follows then that for any $S_{l}$, and any $x\notin S_{l}$, the set $x-S_{l}$ contains exactly one member congruent to $0($mod $p^{u-1})$. Then we have that if $x\notin S_{l}$ for all $l$, then
\begin{align*}\beta=\sum_{l}\sum_{y\in S_{l}}\delta_{S_{l}}(x-y) & =  \sum_{l}\sum_{y\in S_{l},x\not\equiv y({\rm\text{mod }}p^{u-1})}p^{u-2}p^{u-1}&\\
                                                         & =  (p^{u-1}-1)p^{u-2}p^{u-1}. & \end{align*}
\end{proof}
Our next two constructions further generalize Theorem \ref{th3.0}.
\begin{theorem}\label{th4.0} Let $p$ be a prime, and for each $i\in \{0,1,...,p\}$ let $S_{i}=C_{i}\cup\{0\}$ where $C_{i}$ is the $i$th cyclotomic class of order $p+1$ in $\mathbb{F}_{p^{2}}$. Let $I\subset\{0,1,...,p\}$ such that $|I|=2\kappa$ for some positive integer $\kappa$. Say $I=\{i_{1},...,i_{2\kappa}\}$, and define $\Theta_{0}$ to be the set of all pairs $(i,j)\in I\times I$ such that $i\neq j$ and each member of $I$ appears in exactly one ordered pair. Let $m$ be a positive, even integer. Define $\Sigma_{l}=l+\{0,2,...,m-2\}\subset \mathbb{Z}_{m}$ for $l=0,1$, and for each $(i',j')\in \Theta_{0}$ define $S_{i'j'}=\Sigma_{0}\times S_{i'}\cup\Sigma_{1}\times S_{j'}\subset \mathbb{Z}_{m}\times\mathbb{F}_{p^{2}}$. Then $S=\{S_{i'j'}\mid (i',j')\in \Theta_{0}\}$ is a partial geometric difference family with parameters $(mp^{2},mp,\kappa;(\frac{m}{2})^{2}p(p+3)+(\kappa-1)\frac{3}{4}m^{2}p,\kappa\frac{3}{4}m^{2}p)$.
\end{theorem}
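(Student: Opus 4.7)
The plan is to reduce the family condition to the single-set case via Theorem \ref{th3.0} and then count, for each $x \in \mathbb{Z}_m \times \mathbb{F}_{p^2}$, how many members of $\mathcal{S}$ contain $x$. By Theorem \ref{th3.0}, for every $(i',j') \in \Theta_0$ the set $S_{i'j'}$ is itself a partial geometric difference set with parameters $(mp^{2}, mp; \alpha_0, \beta_0)$, where $\alpha_0 = (\tfrac{m}{2})^2 p(p+3)$ and $\beta_0 = \tfrac{3}{4} m^2 p$. Hence
\[ \sum_{y\in S_{i'j'}} \delta_{S_{i'j'}}(x-y) = \begin{cases} \alpha_0, & \text{if } x\in S_{i'j'},\\ \beta_0, & \text{otherwise}. \end{cases} \]
Summing over all $\kappa$ pairs in $\Theta_0$, the left-hand side of the defining identity at $x$ equals $N_x\alpha_0 + (\kappa - N_x)\beta_0$, where $N_x = \lvert\{(i',j')\in \Theta_0 \mid x \in S_{i'j'}\}\rvert$.

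The heart of the proof is then to establish that $N_x \in \{0,1\}$ for the relevant $x$. Writing $x = (h,z)$ and using $\Sigma_0 \cap \Sigma_1 = \emptyset$, the membership condition $(h,z) \in S_{i'j'} = \Sigma_0 \times S_{i'} \cup \Sigma_1 \times S_{j'}$ collapses to $z \in S_{i'}$ when $h \in \Sigma_0$, and to $z \in S_{j'}$ when $h \in \Sigma_1$. Because $C_0, \ldots, C_p$ partition $\mathbb{F}_{p^2} \setminus \{0\}$, each nonzero $z$ lies in a unique cyclotomic class $C_i$; and since every index in $I$ appears in exactly one ordered pair of $\Theta_0$, there is at most one admissible $(i',j')$ (the unique pair with $i' = i$ if $h \in \Sigma_0$, or with $j' = i$ if $h \in \Sigma_1$). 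This yields $N_x = 1$ whenever $x$ lies in $\bigcup_{(i',j')\in\Theta_0} S_{i'j'}$ (with $z \ne 0$) and $N_x = 0$ otherwise, delivering the declared values $\alpha = \alpha_0 + (\kappa-1)\beta_0$ and $\beta = \kappa\beta_0$.

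The main obstacle I anticipate is the locus $z = 0$: since $0 \in S_i$ for every $i$, the point $(h,0)$ belongs to every member of $\mathcal{S}$ simultaneously. I expect this to be handled in the same spirit as the corresponding boundary case in the proof of Theorem \ref{th3.0}, where the group-ring expansion of $\sum_{(i',j')}\underline{S_{i'j'}}\,\underline{S_{i'j'}}\,\underline{S_{i'j'}}^{-1}$, invoking Lemma \ref{le3.0} together with the cyclotomic identities of Lemmas \ref{le2.0} and \ref{le3.1}, pins down the coefficient at $x$ explicitly and confirms that the boundary contributions conform to the stated $\alpha$ and $\beta$. Once the count of $N_x$ is in hand, the two cases of the partial geometric difference family identity follow by direct substitution.
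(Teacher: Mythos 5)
Your overall strategy is exactly the paper's: the entire published proof of Theorem \ref{th4.0} consists of quoting the two values $\alpha_{0}=(\frac{m}{2})^{2}p(p+3)$ and $\beta_{0}=\frac{3}{4}m^{2}p$ from Theorem \ref{th3.0} and declaring the family sum to be $\alpha_{0}+(\kappa-1)\beta_{0}$ when $(h,z)$ lies in some $S_{i'j'}$ and $\kappa\beta_{0}$ otherwise. Your explicit count of $N_{x}$ (using that the classes $C_{0},\dots,C_{p}$ partition $\mathbb{F}_{p^{2}}\setminus\{0\}$ and that each index of $I$ occurs in exactly one ordered pair of $\Theta_{0}$) is a detail the paper omits entirely, and it is correct for all points with $z\neq 0$.

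The step you defer, however --- the locus $z=0$ --- cannot be discharged, and this is a genuine gap (one which the paper's own proof silently shares rather than resolves). Since $0\in S_{i}$ for every $i$, a point $(h,0)$ lies in \emph{every} block $S_{i'j'}$, so $N_{(h,0)}=\kappa$ and the family sum there equals $\kappa\alpha_{0}$ rather than $\alpha_{0}+(\kappa-1)\beta_{0}$; the discrepancy is $(\kappa-1)(\alpha_{0}-\beta_{0})=(\kappa-1)(\frac{m}{2})^{2}p^{2}$, which is nonzero whenever $\kappa\geq 2$. There is no boundary-case group-ring computation in Theorem \ref{th3.0} that can rescue this: that theorem concerns a single block, where a point of the block contributes $\alpha_{0}$ exactly once, and the conflict only arises because the $\kappa$ blocks all share the fibre $\mathbb{Z}_{m}\times\{0\}$ (more precisely $\Sigma_{0}\times\{0\}\cup\Sigma_{1}\times\{0\}$). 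So as stated the constant $\alpha$ fails at these $m$ points for every $\kappa\geq 2$, and the statement either must be restricted to $\kappa=1$ (where it collapses to Theorem \ref{th3.0}) or the blocks modified to be pairwise disjoint. You were right to isolate $z=0$ as the crux; the honest conclusion is that it is a counterexample to the claimed parameters rather than a removable technicality, so a complete proof along these lines --- or any lines --- is not available without amending the theorem.
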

\begin{proof} We have already established in Theorem \ref{th3.0} that if $(h,z)\in S_{i'j'}$ then \[
\sum_{(h',z')\in S_{i'j'}}\delta_{S_{i'j'}}((h,z)-(h',z'))=(\frac{m}{2})^{2}p(p+3),\] and if $(h,z)\notin S_{i'j'}$ then \[
\sum_{(h',z')\in S_{i'j'}}\delta_{S_{i'j'}}((h,z)-(h',z'))=\frac{3}{4}m^{2}p.\] Then if $(h,z)\in S_{i'j'}$ for some $(i',j')\in\Theta_{0}$ we have \[
\alpha=\sum_{(i,j)\in\Theta_{0}}\sum_{(h',z')\in S_{ij}}\delta_{S_{ij}}((h,z)-(h',z'))=(\frac{m}{2})^{2}p(p+3)+(\kappa-1)\frac{3}{4}m^{2}p,\] and if $(h,z)\notin S_{i'j'}$ for all $(i'j')\in\Theta_{0}$ we have \[
\beta=\sum_{(i,j)\in\Theta_{0}}\sum_{(h',z')\in S_{ij}}\delta_{S_{ij}}((h,z)-(h',z'))=\kappa\frac{3}{4}m^{2}p.\]
\end{proof}
The proofs of the following corollaries are omitted as they use simple counting principals similar to those of Theorem \ref{th4.0}.
\begin{corollary}\label{co4.0} Let $p$ be a prime, and for each $i\in \{0,1,...,p\}$ let $S_{i}=C_{i}\cup\{0\}$ where $C_{i}$ is the $i$th cyclotomic class of order $p+1$ in $\mathbb{F}_{p^{2}}$. Let $I\subset\{0,1,...,p\}$ such that $|I|=2\kappa$ for some positive integer $\kappa$. Say $I=\{i_{1},...,i_{2\kappa}\}$, and define $\Theta_{1}=\{(i_{2\kappa},i_{1}),(i_{1},i_{2}),(i_{2},i_{3}),...,(i_{2\kappa-1},i_{2\kappa})\}$. Let $m$ be a positive integer. Define $\Sigma_{l}=l+\{0,2,...,m-2\}\subset \mathbb{Z}_{m}$ for $l=0,1$, and for each $(i',j')\in \Theta_{1}$ define $S_{i'j'}=\Sigma_{0}\times S_{i'}\cup\Sigma-{1}\times S_{j'}\subset \mathbb{Z}_{m}\times\mathbb{F}_{p^{2}}$. Then $S=\{S_{i'j'}\mid (i',j')\in \Theta_{1}\}$ is a partial geometric difference family with parameters $(mp^{2},mp,\kappa;(\frac{m}{2})^{2}p(p+3)+(2\kappa-1)\frac{3}{4}m^{2}p,\kappa\frac{3}{2}m^{2}p)$.
\end{corollary}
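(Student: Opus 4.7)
The plan is to reduce the family-level computation to the block-by-block estimate already extracted inside the proof of Theorem \ref{th3.0}, and then to count, for each point $(h,z)\in\mathbb{Z}_m\times\mathbb{F}_{p^2}$, the number of members of $\Theta_1$ that contain $(h,z)$. From Theorem \ref{th3.0}, every individual block $S_{i'j'}$ contributes $(\tfrac{m}{2})^2 p(p+3)$ to the inner sum $\sum_{(h',z')\in S_{i'j'}}\delta_{S_{i'j'}}((h,z)-(h',z'))$ when $(h,z)\in S_{i'j'}$, and $\tfrac{3}{4}m^2 p$ otherwise. Because $|\Theta_1|=2\kappa$, evaluating the outer sum only requires knowing how many of the $2\kappa$ pairs contain $(h,z)$.

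Next, I would exploit the cyclic structure of $\Theta_1=\{(i_{2\kappa},i_1),(i_1,i_2),\ldots,(i_{2\kappa-1},i_{2\kappa})\}$: in this directed Hamilton cycle on $I$, each index $a\in I$ appears exactly once as a first coordinate and exactly once as a second coordinate. Since $\Sigma_0,\Sigma_1$ partition $\mathbb{Z}_m$ and $S_{ij}=\Sigma_0\times S_i\cup\Sigma_1\times S_j$, a nonzero $z$ lies in at most one $S_a$; for $z\in C_a$ with $a\in I$, the condition $(h,z)\in S_{ij}$ forces $i=a$ when $h\in\Sigma_0$ and forces $j=a$ when $h\in\Sigma_1$, so in either case exactly one pair of $\Theta_1$ contains $(h,z)$. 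For $z$ outside $\bigcup_{a\in I}C_a$, no pair contains $(h,z)$.

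Combining the two counts, for $(h,z)$ lying in some block of the family we obtain
\[
\alpha=\left(\tfrac{m}{2}\right)^2 p(p+3)+(2\kappa-1)\cdot\tfrac{3}{4}m^2 p,
\]
while for $(h,z)$ lying in no block all $2\kappa$ blocks contribute $\tfrac{3}{4}m^2 p$, so
\[
\beta=2\kappa\cdot\tfrac{3}{4}m^2 p=\kappa\cdot\tfrac{3}{2}m^2 p,
\]
matching the claimed parameters. The only real obstacle is the bookkeeping needed to confirm the ``each element of $I$ occurs once in each coordinate'' property for $\Theta_1$, but this is immediate from its explicit description as a directed cycle; everything else is a routine reuse of the work already done in Theorem \ref{th3.0}, which is exactly why the paper relegates this to a corollary with an omitted proof.
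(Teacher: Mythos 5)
Your overall strategy---reusing the per-block values $(\frac{m}{2})^{2}p(p+3)$ and $\frac{3}{4}m^{2}p$ from Theorem \ref{th3.0} and then counting, for each point $(h,z)$, how many of the $2\kappa$ blocks indexed by $\Theta_{1}$ contain it---is exactly the reduction the paper intends when it omits this proof, and your observation that each index of $I$ occurs exactly once as a first coordinate and once as a second coordinate of $\Theta_{1}$ correctly handles every point with $z\neq 0$. The gap is at $z=0$. Since $S_{i}=C_{i}\cup\{0\}$, every block $S_{i'j'}=\Sigma_{0}\times S_{i'}\cup\Sigma_{1}\times S_{j'}$ contains all of $\mathbb{Z}_{m}\times\{0\}$, so the $m$ points $(h,0)$ lie in \emph{all} $2\kappa$ blocks of the family, not in none of them as your trichotomy asserts: $0$ is indeed ``outside $\bigcup_{a\in I}C_{a}$,'' but it belongs to every $S_{a}$. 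At such a point the outer sum equals $2\kappa(\frac{m}{2})^{2}p(p+3)$, which matches neither your $\alpha$ nor your $\beta$ unless $(\frac{m}{2})^{2}p(p+3)=\frac{3}{4}m^{2}p$, i.e.\ $p+3=3$. So the step ``exactly one pair of $\Theta_{1}$ contains $(h,z)$'' needs the hypothesis $z\neq 0$, and the case $z=0$ must be confronted separately rather than absorbed into the $\beta$ case; as written your argument does not establish the constancy of $\alpha$ on the union of the blocks. (The same tacit assumption that each point lies in at most one block already underlies the proof of Theorem \ref{th4.0}, so this is a defect you have inherited rather than introduced, but a complete proof of the corollary cannot avoid it.)

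A smaller point worth flagging: $|\Theta_{1}|=2\kappa$, so the family has $2\kappa$ blocks, not $\kappa$ as the third entry of the stated parameter tuple suggests. Your computation $\beta=2\kappa\cdot\frac{3}{4}m^{2}p=\kappa\cdot\frac{3}{2}m^{2}p$ implicitly uses the correct block count and agrees with the stated $\beta$, but you should note the discrepancy with the stated $n=\kappa$ explicitly.
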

\begin{corollary}\label{co4.1} Let $p$ be a prime, and for each $i\in \{0,1,...,p\}$ let $S_{i}=C_{i}\cup\{0\}$ where $C_{i}$ is the $i$th cyclotomic class of order $p+1$ in $\mathbb{F}_{p^{2}}$. Let the integer $\kappa$ and $\Theta_{e}$, for $e=0$ resp. $1$, be defined as in Theorem \ref{th4.0} resp. Corollary \ref{co4.0}. For each $(i',j')\in \Theta_{e}$ define $S_{i'j'}^{e}=\{0,3\}\times S_{i'}\cup\{1,4\}\times S_{j'}\subset \mathbb{Z}_{6}\times \mathbb{F}_{p^{2}}$ for $e=0,1$, and $\mathcal{S}^{e}=\{S_{i'j'}^{e}\mid(i',j')\in\Theta_{e}\}$. Then $\mathcal{S}^{0}$ resp. $\mathcal{S}^{1}$ is a partial geometric difference family with parameters $(6p^{2},4p,\kappa;20p+8(\kappa-1)p,8\kappa p)$ resp. $(6p^{2},4p,2\kappa;20p+8(2\kappa-1)p,16\kappa p)$.
\end{corollary}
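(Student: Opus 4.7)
The plan is to reduce Corollary \ref{co4.1} to Theorem \ref{th3.1} by exactly the same device that reduces Theorem \ref{th4.0} to Theorem \ref{th3.0}: evaluate the family sum block-by-block via the single-block result, then do a short combinatorial count of how many blocks of $\mathcal{S}^e$ contain a given point. For each fixed $(i',j')\in\Theta_e$ with $i'\neq j'$, Theorem \ref{th3.1} supplies
\[
\sum_{(h',z')\in S^e_{i'j'}}\delta_{S^e_{i'j'}}\bigl((h,z)-(h',z')\bigr)\;=\;\begin{cases}20p, & (h,z)\in S^e_{i'j'},\\ 8p, & (h,z)\notin S^e_{i'j'},\end{cases}
\]
so the only new content is the counting of pairs in $\Theta_e$.

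I would then do the bookkeeping on $\Theta_0$ and $\Theta_1$ separately. By definition $\Theta_0$ is a perfect matching on $I$ (each index of $I$ occurs in exactly one ordered pair), and the cyclotomic classes $C_i$ are pairwise disjoint; together these facts imply that a point $(h,z)$ with $z\in C_{i^\ast}\setminus\{0\}$ for some $i^\ast\in I$ and $h\in\{0,1,3,4\}$ lies in exactly one block of $\mathcal{S}^0$. Summing the single-block identity over the $\kappa$ pairs therefore yields $\alpha=20p+(\kappa-1)\cdot 8p$, and for $(h,z)$ outside the union $\beta=\kappa\cdot 8p=8\kappa p$, matching the stated parameters. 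For $\mathcal{S}^1$ the cyclic sequence $\Theta_1$ lists each index of $I$ once as a first coordinate (paired with $\{0,3\}\times S_{i'}$) and once as a second coordinate (paired with $\{1,4\}\times S_{j'}$); the split on $h\in\{0,3\}$ versus $h\in\{1,4\}$ again selects exactly one of these two occurrences, so a generic point still lies in exactly one block. The sum is then $\alpha = 20p+(2\kappa-1)\cdot 8p$ and $\beta=2\kappa\cdot 8p=16\kappa p$, as required.

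The delicate step I expect to be the main obstacle is the boundary points of the form $(h,0)$ with $h\in\{0,1,3,4\}$: since $0\in S_i$ for every $i\in I$, these actually lie in every block $S^e_{ij}$, so the naive per-block count does not immediately give the claimed constant $\alpha$. To verify that such multiply-covered points still produce the same total, I would mimic the group-ring computation inside Theorem \ref{th3.1}: expand $\underline{S^e_{i'j'}}\,(\underline{S^e_{i'j'}})^{-1}$ in $\mathbb{Z}[\mathbb{Z}_{6}\times\mathbb{F}_{p^{2}}]$, apply Lemma \ref{le2.0} to each product $\underline{C_{i}C_{j}^{-1}}$ using that $f=(p^{2}-1)/(p+1)$ is even, and then read off the contribution of each $C_l$ from the cyclotomic numbers of Lemma \ref{le3.1}. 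This is routine but bookkeeping-heavy, and is exactly the place where the author's ``simple counting principles'' hide the substance; once it is discharged, the claimed parameters for $\mathcal{S}^0$ and $\mathcal{S}^1$ follow from the two-line summation above.
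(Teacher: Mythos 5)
Your overall route --- evaluate each block via Theorem \ref{th3.1} and then count how many blocks of $\mathcal{S}^{e}$ contain a given point --- is exactly what the paper intends: it omits the proof, saying only that it uses simple counting similar to Theorem \ref{th4.0}, and the proof of Theorem \ref{th4.0} is precisely your two-line summation. The genuine gap is the step you correctly single out as delicate and then defer. The four points $(0,0),(3,0),(1,0),(4,0)$ lie in \emph{every} block of $\mathcal{S}^{e}$, since $0\in S_{i}$ for all $i$ and the disjointness of the $C_{i}$ forces $S^{e}_{i_{1}j_{1}}\cap S^{e}_{i_{2}j_{2}}=\{0,1,3,4\}\times\{0\}$ for distinct pairs. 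You assert that a group-ring computation will show these points ``still produce the same total,'' but carrying it out shows the opposite: Theorem \ref{th3.1} gives the single-block sum $20p$ at every point \emph{of} a block, including $(0,0)$, so at such a point the family total is $\kappa\cdot 20p$ (resp.\ $2\kappa\cdot 20p$ for $\mathcal{S}^{1}$), whereas at a point lying in exactly one block it is $20p+8(\kappa-1)p$ (resp.\ $20p+8(2\kappa-1)p$). The discrepancy is $12(n-1)p$ where $n$ is the number of blocks, so the family sum is not constant on $\bigcup S^{e}_{i'j'}$ whenever there is more than one block ($\kappa\ge 2$ for $\mathcal{S}^{0}$, and all $\kappa\ge 1$ for $\mathcal{S}^{1}$). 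No bookkeeping can repair this; your proof is incomplete at exactly the point you flagged, and the same silent assumption --- that each point of the union lies in exactly one block --- is what the paper's own proof of Theorem \ref{th4.0} relies on.

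A secondary caution: the single-block constants you import from Theorem \ref{th3.1} do not themselves survive scrutiny for $p>3$. In that proof the term $\sum_{v\in\{0,3\}\times(S_{i'}\setminus\{0\})}n_{v}$ is claimed to equal $8p$, but each such $v$ occurs $2p$ times in $\Delta(S_{i'j'})$ (two choices of equal first coordinates in $\{0,3\}$ times $\delta_{S_{i'}}=p$) and there are $2(p-1)$ such $v$, giving $4p(p-1)$, which equals $8p$ only when $p=3$; the correct in-block value is $4p^{2}+8p$. So even the input to your reduction needs recomputation before the family-level count can be trusted.
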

We close this section with the following construction.
\begin{theorem}\label{th4.2} Let $G$ be an Abelian group of odd composite order $n$. Let $H$ be a proper, nontrivial subgroup of $G$ of order $m$, and set $\kappa=\frac{n/m-1}{2}$. Suppose that $g_{1},...,g_{\kappa}\in G$ are such that $\{H\pm g_{i}\mid 1\leq i\leq\kappa\}$ is a partition of $G\setminus H$. Then $\mathcal{S}=\{H\cup(H+g_{i})\mid 1\leq i\leq \kappa\}$ is a partial geometric difference family with parameters $(n,2m,\kappa;n+m(m-1),2m^{2})$.
\end{theorem}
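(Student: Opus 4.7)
The plan is to perform the calculation in the group ring $\mathbb{Z}[G]$, since $\sum_{i=1}^{\kappa}\sum_{y\in S_i}\delta_{S_i}(x-y)$ is exactly the coefficient of $x$ in $\mathcal{T}:=\sum_{i=1}^{\kappa}\underline{S_i}^{2}\,\underline{S_i}^{-1}$. The first step is to record two basic identities: $\underline{H}^{2}=m\underline{H}$ (because $H$ is a subgroup of order $m$) and $\underline{H+g}\cdot\underline{H+h}=m\underline{H+g+h}$ for any $g,h\in G$. Writing $\underline{S_i}=\underline{H}+\underline{H+g_i}$ and $\underline{S_i}^{-1}=\underline{H}+\underline{H-g_i}$ (using $-H=H$), a short expansion yields
\[
\underline{S_i}^{2}\,\underline{S_i}^{-1}=m^{2}\bigl(3\underline{H}+3\underline{H+g_i}+\underline{H-g_i}+\underline{H+2g_i}\bigr).
\]

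Second, I would sum over $i=1,\ldots,\kappa$ and simplify with two aggregate identities coming from the hypotheses. The partition condition immediately gives
\[
\sum_{i=1}^{\kappa}\bigl(\underline{H+g_i}+\underline{H-g_i}\bigr)=\underline{G}-\underline{H}.
\]
Next, because $n$ is odd the index $|G/H|=n/m=2\kappa+1$ is odd, so $x\mapsto 2x$ is a bijection of $G/H$ that commutes with $g\mapsto -g$. Consequently $\{H\pm 2g_i\}_{i=1}^{\kappa}$ is again a partition of $G\setminus H$, giving
\[
\sum_{i=1}^{\kappa}\bigl(\underline{H+2g_i}+\underline{H-2g_i}\bigr)=\underline{G}-\underline{H}
\]
as well. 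Combining these lets me rewrite $\mathcal{T}$ as a combination of $\underline{H}$, $\underline{G}$, and an asymmetric leftover built from $\sum_i\underline{H+g_i}$ and $\sum_i\underline{H+2g_i}$.

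Third, I would read off the coefficient of $x$ in $\mathcal{T}$ by cases on the $H$-coset of $x$: (i) $x\in H$; (ii) $x\in H+g_j$ for some $j$ (so $x\in S_j\subseteq\bigcup_i S_i$); and (iii) $x$ lying in a coset $H-g_j$ (so $x\notin\bigcup_i S_i$, by the partition). In each case three of the four coset-indicator terms in the expansion contribute a quantity determined purely by the partition identity and the value of $\kappa$. The fourth term, $\sum_i\underline{H+2g_i}$, requires tracking whether the doubled coset $H+2g_i$ coincides with $H+g_j$ or with $H-g_j$ for the particular $j$ at hand; this bookkeeping is where I expect the main difficulty to lie. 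The odd-index condition on $G/H$ is precisely what makes the cross-terms from the doubled cosets average out, forcing the coefficient of $x$ to collapse to the two claimed constants $\alpha=n+m(m-1)$ (for $x\in\bigcup_i S_i$) and $\beta=2m^{2}$ (otherwise), independently of the particular $j$.
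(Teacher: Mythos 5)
Your setup is correct and is in fact more careful than the paper's own argument: the quantity to be evaluated is indeed the coefficient of $x$ in $\sum_{i}\underline{S_{i}}^{2}\,\underline{S_{i}}^{-1}$, your expansion $\underline{S_{i}}^{2}\,\underline{S_{i}}^{-1}=m^{2}\bigl(3\underline{H}+3\underline{H+g_{i}}+\underline{H-g_{i}}+\underline{H+2g_{i}}\bigr)$ is right, and both partition identities hold. The gap is the final step, which you defer as ``bookkeeping'': the asymmetric leftovers do \emph{not} average out, and the coefficient of $x$ does not collapse to two constants. Reading coefficients off your own expansion: for $x\in H$ the value is $3\kappa m^{2}$; for $x\in H+g_{j}$ it is $3m^{2}+m^{2}\epsilon_{j}$, where $\epsilon_{j}\in\{0,1\}$ records whether $H+g_{j}=H+2g_{i}$ for some $i$; for $x\in H-g_{j}$ it is $m^{2}+m^{2}\epsilon_{j}'$ with $\epsilon_{j}'$ defined analogously. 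The term $\sum_{i}\underline{H+2g_{i}}$ contributes at $x$ exactly when the coset of $x$ lies in the ``positive half'' $\{H+2g_{i}\}_{i}$ of the doubled partition, and that depends on $x$ and on the chosen representatives $g_{i}$, not on whether $x\in\bigcup_{i}S_{i}$; the odd index only guarantees the \emph{symmetrized} identity, which never isolates $\sum_{i}\underline{H+2g_{i}}$ alone. Already the value $3\kappa m^{2}$ on $H$ disagrees with the claimed $\alpha=n+m(m-1)=2\kappa m+m^{2}$ (for $G=\mathbb{Z}_{9}$, $H=\{0,3,6\}$, $\kappa=1$ this is $27$ versus $15$), and for $\kappa\geq 2$ the value on $H$ exceeds the value $3m^{2}$ or $4m^{2}$ attained on the cosets $H+g_{j}$, so no constant $\alpha$ can exist at all.

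A concrete instance: take $G=\mathbb{Z}_{15}$, $H=\{0,5,10\}$, $g_{1}=1$, $g_{2}=2$. The sum equals $54$ at $x=0$, $27$ at $x=1$, and $36$ at $x=2$, all three points lying in $\bigcup_{i}S_{i}$; it equals $18$ at $x=4$ but $9$ at $x=3$, both points lying outside $\bigcup_{i}S_{i}$. So carrying out your computation honestly refutes the statement for $\kappa\geq 2$ rather than proving it (for $\kappa=1$ the family is partial geometric, but with $\alpha=3m^{2}$, not $n+m(m-1)$). You should not measure your attempt against the paper here: its proof asserts $\alpha=2\kappa|H|+m^{2}$ without ever tracking the coset $H+2g_{i}$ that arises in $\Delta(S_{i})\cdot\underline{S_{i}}$, which is precisely the term your (correct) expansion isolates and which destroys the claimed constancy.
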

\begin{proof} If $h\in H\cup(H+g_{i'})$ for some fixed $i'\in \{1,...,\kappa\}$ then \begin{equation}\label{eq4.2}
g-(H\cup(H+g_{i'}))=(H+g)\cup(H+(g-g_{i'}))=\begin{cases} (H+g_{i'})\cup H, \text{ if } h\in H+g_{i'},\\
                                                        H\cup(H-g_{i'}) \text{ otherwise}.\end{cases}\end{equation}
If $g\notin H\cup(H+g_{i})$ for all $i\in\{1,...,\kappa\}$ then, for each $i$, \begin{equation}\label{eq4.3}
 g-(H\cup(H+g_{i}))=(H+g)\cup(H+(g-g_{i}))\end{equation} where $H+g$ and $H+(g-g_{i})$ are distinct members of $\{H\pm g_{i}\mid 1\leq i\leq\kappa\}$. Also notice that, for fixed $i'\in\{1,...,\kappa\}$, we have \begin{eqnarray}\label{eq4.4}
 \Delta(H\cup(H+g_{i'})) & = & \underline{(H\cup(H+g_{i'}))(H\cup(H+g_{i'}))}^{-1}\nonumber \\
                         & = & 2\underline{HH}^{-1}+\underline{H(H+g_{i'})}^{-1}+\underline{(H+g_{i'})H}^{-1} \nonumber \\
                         & = & 2m\underline{H}+m\underline{(H-g_{i'})}+m\underline{(H+g_{i'})}.\end{eqnarray}
 Let $n_{u}$ denote the number of occurrences of $u$ in $\Delta(\mathcal{S})=\bigsqcup_{i=1}^{\kappa}\Delta(H\cup(H+g_{i}))$. Then, using (\ref{eq4.2}) and (\ref{eq4.4}), if $g\in \mathcal{S}$ we have \begin{eqnarray*}
 \alpha=\sum_{i=1}^{\kappa}\sum_{g'\in H\cup(H+g_{i})}\delta_{H\cup(H+g_{i})}(g-g') & = & 2\kappa|H|+m|H\pm g+{i}|\\
                                                                                    & = & (\frac{n}{m}-1)m+m^{2} \\
                                                                                    & = & n+m(m-1),\end{eqnarray*} and,
 using (\ref{eq4.3}) and (\ref{eq4.4}), if $g\notin \mathcal{S}$ we have \begin{eqnarray*}
 \beta=\sum_{i=1}^{\kappa}\sum_{g'\in H\cup(H+g_{i})}\delta_{H\cup(H+g_{i})}(g-g') & = & 2m^{2}.\end{eqnarray*}
\end{proof}
\begin{center}
\captionof{table}{{\scriptsize Parameters of partial geometric difference sets constructed in this paper.}}\label{ta4.0}
\begin{tabular}{|c|c|c|c|}
  \hline
   Reference & $(v,k;\alpha,\beta)$& Group &Information \\
  \hline\hline
 Theorem \ref{th3.0}&$(mp^{2},mp;(\frac{m}{2})^{2}p(p+3),\frac{3}{4}m^{2}p)$&$\mathbb{Z}_{m}\times\mathbb{F}_{p^{2}}$&$m$ even, $p$ an odd prime \\\hline
 Theorem \ref{th3.1}&$(6p^{2},4p;20p,8p)$&$\mathbb{Z}_{6}\times\mathbb{F}_{p^{2}}$&$p$ an odd prime  \\\hline
 Theorem \ref{th3.2}&$(n^{2},n;2n-1,n-1)^{*}$&\shortstack{$A\times B$\\(generic)}&$A,B$ both Abelian groups of order $n$ \\\hline
 Theorem \ref{th3.3}&$(8l,4l;6l^{2},10l^{2})$&$\mathbb{Z}_{2}\times\mathbb{Z}_{n}$&$n=4l$ for positive integer $l$\\\hline
  \multicolumn{4}{l}{{\scriptsize *: This partial geometric difference set is an almost difference set and corresponds to a planar $2$-adesign (see Section \ref{sec5}).}}
\end{tabular}
\end{center}
\newpage
\begin{center}
\captionof{table}{{\scriptsize Parameters of partial geometric difference families constructed in this paper.}}\label{ta4.1}
\begin{tabular}{|c|c|c|c|}
  \hline
   Reference & $(v,k;\alpha,\beta)$& Group &Information \\
  \hline\hline
  Theorem \ref{th4.1}&\shortstack{$(p^{u},p^{u-1},p^{u-1};\alpha,\beta)$\\$\alpha=p^{u}+(p^{u-1}-1)p^{u-1}p^{u-2}$\\$\beta=(P^{u-1}-1)p^{u-1}p^{u-2}$}&$\mathbb{Z}_{p^{u}}$&\shortstack{$p$ an odd prime,\\$u\geq 2$ an integer} \\\hline
 Theorem \ref{th4.0}&\shortstack{$(mp^{2},mp,\kappa;(\frac{m}{2})^{2}p(p+3)+(\kappa-1)\frac{3}{4}m^{2}p,\kappa\frac{3}{4}m^{2}p)$\\$1\leq\kappa\leq\frac{p+1}{2}$}&$\mathbb{Z}_{m}\times\mathbb{F}_{p^{2}}$&
 \shortstack{$m$ even, \\$p$ an odd prime} \\\hline
 Corollary \ref{co4.0}&\shortstack{$(mp^{2},mp,\kappa;(\frac{m}{2})^{2}p(p+3)+(2\kappa-1)\frac{3}{4}m^{2}p,\kappa\frac{3}{2}m^{2}p)$\\$1\leq\kappa\leq\frac{p+1}{2}$}&$\mathbb{Z}_{m}\times\mathbb{F}_{p^{2}}$&
 \shortstack{$m$ even, \\$p$ an odd prime} \\\hline
 Corollary \ref{co4.1}&\shortstack{$(6p^{2},4p,\kappa;20p+8(\kappa-1)p,8\kappa p)$\\$1\leq\kappa\leq\frac{p+1}{2}$}&$\mathbb{Z}_{6}\times\mathbb{F}_{p^{2}}$&$p$ an odd prime \\\hline
 Corollary \ref{co4.1}&\shortstack{$(6p^{2},4p,2\kappa;20p+8(2\kappa-1)p,16\kappa p)$\\$1\leq\kappa\leq\frac{p+1}{2}$}&$\mathbb{Z}_{6}\times\mathbb{F}_{p^{2}}$&$p$ an odd prime \\\hline
 Theorem \ref{th4.2}&\shortstack{$(n,2m,\kappa;n+m(m-1),2m^{2})$\\$\kappa=\frac{n/m-1}{2}$}&\shortstack{$G$\\(generic)}&\shortstack{$G$ Abelian of odd,\\composite order\\$n$ with $m|n$}\\\hline
\end{tabular}
\end{center}
\section{Partial Geometric Designs, Partially Balanced Designs, Adesigns, and Their Links}\label{sec5}
We first discuss an important connection between partial geometric designs and tactical configurations that have exactly {\it two} indices, i.e., tactical configurations $(V,\mathcal{B})$ where there are integers $\mu_{1}\neq\mu_{2}$ such that for any pair of distinct points $x,y\in V$, $r_{xy}\in\{\mu_{1},\mu_{2}\}$. If $A$ is the $v\times b$ incidence matrix of a tactical configuration $(V,\mathcal{B})$ with $v$ points, $b$ blocks, and the two indices $\mu_{1}\neq\mu_{2}$, then we will denote by $A_{1}$ the symmetric matrix whose $(i,j)$th entry is $1$ if the points corresponding to the $i$th and $j$th rows of $A$ are contained in exactly $\mu_{1}$ blocks, and is $0$ otherwise. We will need the following lemma.

\begin{lemma}\label{le6.0} {\rm \cite{NEUM}} An incidence structure $(V,\mathcal{B})$ is a partial geometric design with parameters $(v,k,r;\alpha',\beta')$ if and only if its incidence matrix $A$ satisfies \[
AJ=rJ,JA=kJ \text{ and } AA^{T}A=n'A+\alpha'J,\] where $n'=r+k+\beta'-\alpha'-1$.
\end{lemma}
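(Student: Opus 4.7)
The plan is to translate the partial geometric axiom into a matrix identity by computing the entries of $AA^{T}A$ combinatorially. First, observe that the hypotheses $AJ=rJ$ and $JA=kJ$ are merely the row-sum and column-sum conditions, i.e. they exactly encode that $(V,\mathcal{B})$ is a tactical configuration with constant replication number $r$ and constant block size $k$. So these two equations carry no partial-geometric content; all the work is in interpreting $AA^{T}A$.

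The key computation is to evaluate the $(u,b)$-entry of $M:=AA^{T}A$. Since $(AA^{T})_{u,w}$ counts the blocks containing both $u$ and $w$ (with $(AA^{T})_{u,u}=r$), one has
\[
M_{u,b}=\sum_{w\in b}(AA^{T})_{u,w}=\bigl|\{(w,c)\in V\times\mathcal{B}:w\in b,\ w\in c,\ u\in c\}\bigr|.
\]
I would then stratify this count by the two dichotomies $w=u$ vs.\ $w\neq u$ and $c=b$ vs.\ $c\neq b$. The $(w=u)$ contribution is $r$ when $u\in b$ and $0$ otherwise; the $(c=b,\,w\neq u)$ contribution is $k-1$ when $u\in b$ and $0$ otherwise; and the remaining pairs $(w,c)$ with $w\in b\setminus\{u\}$, $u\in c$, $w\in c$, and $c\neq b$ are exactly the flags counted by $s(u,b)$. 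Assembling the pieces gives
\[
M_{u,b}=\begin{cases} s(u,b)+r+k-1, & u\in b,\\ s(u,b), & u\notin b. \end{cases}
\]

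From this one direction is immediate: if $(V,\mathcal{B})$ is partial geometric, then $s(u,b)$ takes only two values as $(u,b)$ ranges over $V\times\mathcal{B}$, so $M_{u,b}$ also takes only two values, one on the support of $A$ and one on its complement. Writing these two constants as $n'+c$ (on $A=1$) and $c$ (on $A=0$) yields $M=n'A+cJ$ with $n'=r+k-1+(\text{support value of }s)-(\text{non-support value of }s)$ and $c=(\text{non-support value of }s)$. Matching this against the lemma's $n'A+\alpha' J$ with $n'=r+k+\beta'-\alpha'-1$ is then a routine identification of which of $\alpha',\beta'$ is associated with which case (one just has to be careful about the naming convention $\alpha'\leftrightarrow\beta'$ used in the paper's definition).

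For the converse, suppose $AA^{T}A=n'A+\alpha'J$. Reading entry-by-entry, $M_{u,b}$ equals $n'+\alpha'$ on incidences and $\alpha'$ on non-incidences; combined with the formula for $M_{u,b}$ in terms of $s(u,b)$, this forces $s(u,b)$ to take a fixed value on each of the two cases, which is precisely the partial geometric property, and the two values recovered agree with the parameters $\alpha',\beta'$ after solving the linear relations $n'+\alpha'=s+r+k-1$ and $\alpha'=s$. The only real obstacle is notational bookkeeping; the combinatorial content is entirely contained in the stratified count of $M_{u,b}$ above.
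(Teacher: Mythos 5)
The paper does not actually prove this lemma --- it is quoted from \cite{NEUM} and used as a black box --- so there is no in-paper argument to compare against. Your proof is correct and is the standard one: the identity $(AA^{T}A)_{u,b}=s(u,b)+(r+k-1)\,[u\in b]$ follows exactly as you compute it (the $w=u$ stratum contributes $r$, the $c=b,\ w\neq u$ stratum contributes $k-1$, both only when $u\in b$, and the rest is $s(u,b)$), and from this both directions of the equivalence are immediate. Your caution about the $\alpha'\leftrightarrow\beta'$ bookkeeping is warranted and is a genuine issue, not a defect of your argument: with the paper's Section 2.1 convention ($s(u,b)=\alpha'$ on incidences, $\beta'$ off), your computation yields $AA^{T}A=(\alpha'+r+k-1-\beta')A+\beta'J$, whereas the lemma's form $n'A+\alpha'J$ with $n'=r+k+\beta'-\alpha'-1$ corresponds to the opposite convention ($\beta'$ on incidences, $\alpha'$ off). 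That mismatch is the same inconsistency the paper itself flags in Remark \ref{re2.0}, so your identification of the constants is the right resolution rather than a gap.
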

Suppose $(V,\mathcal{B})$ is a partial geometric design with parameters $(v,k,r;\alpha',\beta')$ and the two indices $\mu_{1}\neq \mu_{2}$. Let $A$ be the incidence matrix of $(V,\mathcal{B})$. It is easy to see that $A$ satisfies \begin{equation}\label{eq6.0}
AA^{T}=(r-\mu_{2})I+(\mu_{1}-\mu_{2})A_{1}+\mu_{2}(J-A_{1}-I).\end{equation} Since $(V,\mathcal{B})$ is partial geometric, by Lemma \ref{le6.0} we have that $A$ also satisfies \begin{equation}\label{eq6.1}
n'A+\alpha'J=AA^{T}A=(r-\mu_{2})A+(\mu_{1}-\mu_{2})A_{1}A+\mu_{2}kJ.\end{equation}
Then, using (\ref{eq6.0}) and (\ref{eq6.1}), we must have that $A_{1}A=\nu A+\zeta (J-A)$ for some integers $\nu$ and $\zeta$. Moreover we must have $n'+\alpha'=r-\mu_{2}+\nu$ and $\alpha'=\zeta+\mu_{2}k$. This means that, for each pair $(x,b)\in V\times \mathcal{B}$, we have \begin{equation}\label{eq6.2}
|\{y\in b\mid y\neq x,r_{xy}=\mu_{1}\}|=\begin{cases} \nu\quad (=n'+\alpha'-r+\mu_{2}), \text{ if } x\in b,\\
                                                      \zeta\quad (=\alpha'-\mu_{2}k), \text{ otherwise}.\end{cases}\end{equation}
Note that condition (\ref{eq6.2}) is necessary and sufficient.
\par
 Now set $\sigma=r-\mu_{2}$, $\phi=\mu_{1}-\mu_{2}$ and $\psi=\nu-\zeta$. Then we can write $AA^{T}=\sigma I+\phi A_{1}+\mu_{2} J$ and $A_{1}A=\psi A+\zeta J$. By Lemma \ref{le6.0}, and since $A_{1}$ is symmetric, we have \[
krJ=AA^{T}J=\psi A_{1}J+\sigma J+\mu_{2}kJ=JAA^{T}.\]
Then, after some simple arithmetic, we can get \begin{equation}\label{eq6.3} A_{1}J=JA_{1}=\kappa J \end{equation} where $\kappa=\frac{(k-1)r+\mu_{2}(1-v)}{\mu_{1}-\mu_{2}}$. Now set $\epsilon = \zeta r-\mu_{2}(\kappa-\psi)$. Then we have \begin{eqnarray}\label{eq6.4}
(\psi A +\zeta J)A^{T}=A_{1}AA^{T}=\phi A_{1}^{2}+\sigma A_{1}+\mu_{2}\kappa J & \Leftrightarrow & \phi A_{1}^{2}+\sigma A_{1}-\psi\phi A_{1}+\psi\sigma I=\epsilon J \nonumber\\
                                                                               & \Leftrightarrow & A_{1}^{2}
=k'I+aA_{1}+b(J-I-A_{1}),\end{eqnarray} where $k'=\kappa=\frac{\epsilon-\psi\sigma}{\phi},a=\frac{\epsilon+\psi\phi-\sigma}{\phi}$ and $b=\frac{\epsilon}{\phi}$ are integers (note that $k'=\kappa$ follows from (\ref{eq6.3})). From (\ref{eq6.3}) and (\ref{eq6.4}) it is clear that $A_{1}$ is the adjacency matrix of a strongly regular graph with parameters $(v,k',a,b)$ (see Subsection \ref{ssec2.0}). We have thus shown the following.
\begin{lemma}\label{th6.0} A tactical configuration with the two indices $\mu_{1}\neq\mu_{2}$ and incidence matrix $A$ is partial geometric with parameters $(v,k,r;\alpha',\beta')$ if and only if there are integers $\nu$ and $\zeta$ such that for each pair $(x,b)\in V\times\mathcal{B}$,\[  |\{y\in b\mid y\neq x,r_{xy}=\mu_{1}\}|=\begin{cases} \nu\quad (=n'+\alpha'-r+\mu_{2}), \text{ if } x\in b,\\
                                                      \zeta\quad (=\alpha'-\mu_{2}k), \text{ otherwise},\end{cases}\] and $A_{1}$ is the
adjacency matrix of a strongly regular graph with parameters $(v,k',a,b)$ where $k'=\frac{\epsilon-\psi\sigma}{\phi},a=\frac{\epsilon+\psi\phi-\sigma}{\phi}$ and $b=\frac{\epsilon}{\phi}$. Moreover, $k'=\frac{(k-1)r+\mu_{2}(1-v)}{\mu_{1}-\mu_{2}}$.
\end{lemma}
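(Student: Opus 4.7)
The plan is to establish both directions of the equivalence through Neumaier's characterization (Lemma~\ref{le6.0}), which reduces ``partial geometric'' to the single matrix identity $AA^{T}A = n'A + \alpha'J$. The paragraphs preceding the lemma already contain most of the forward calculation, so the task really splits into (a) consolidating that derivation and (b) verifying the converse.

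For the forward direction, the two-index hypothesis lets me write $AA^{T} = (r-\mu_{2})I + (\mu_{1}-\mu_{2})A_{1} + \mu_{2}J$ by inspection of diagonal/off-diagonal entries. Multiplying on the right by $A$, using $JA = kJ$, and comparing with Neumaier's identity isolates $(\mu_{1}-\mu_{2})A_{1}A$ as a combination of $A$ and $J$, so $A_{1}A = \nu A + \zeta(J-A)$ for suitable integers $\nu,\zeta$. Since the $(x,b)$-entry of $A_{1}A$ is precisely $|\{y\in b: y\neq x,\,r_{xy}=\mu_{1}\}|$, this is the stated count condition. To extract the strongly regular structure of $A_{1}$ I would first derive $A_{1}J = JA_{1} = \kappa J$ by a direct count of flags at a fixed vertex (giving $\kappa = \frac{(k-1)r + \mu_{2}(1-v)}{\mu_{1}-\mu_{2}}$), then compute $A_{1}AA^{T}$ in two orderings and equate them: writing $A_{1}A = \psi A + \zeta J$ on one side and expanding $AA^{T}$ on the other produces $\phi A_{1}^{2} = \psi\sigma I + (\psi\phi-\sigma)A_{1} + \epsilon J$, which after dividing by $\phi = \mu_{1}-\mu_{2}$ is the required SRG relation $A_{1}^{2} = k'I + aA_{1} + b(J-I-A_{1})$.

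For the converse, assume the count condition and the SRG condition. The count condition is equivalent to $A_{1}A = \nu A + \zeta(J-A)$, so direct substitution gives
\[
AA^{T}A = (r-\mu_{2})A + (\mu_{1}-\mu_{2})[\nu A + \zeta(J-A)] + \mu_{2}kJ = n'A + \alpha'J,
\]
with $n' = (r-\mu_{2}) + (\mu_{1}-\mu_{2})(\nu-\zeta)$ and $\alpha' = (\mu_{1}-\mu_{2})\zeta + \mu_{2}k$ being exactly the constants required by Neumaier. Together with $AJ = rJ$ and $JA = kJ$ (automatic for a tactical configuration), Lemma~\ref{le6.0} delivers partial geometric-ness. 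It is worth noting that the SRG condition on $A_{1}$ is then a \emph{consequence} of the count condition rather than an independent input, but the lemma statement bundles it into the characterization.

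The main obstacle will not be any single conceptual step but the parameter bookkeeping. In particular I need to verify that the $k'$ emerging from the $A_{1}AA^{T}$ comparison coincides with the $\kappa$ coming from $A_{1}J = \kappa J$; this double identification is what promotes $A_{1}$ from ``satisfying an $\{I,A_{1},J\}$-polynomial relation'' to ``being the adjacency matrix of a genuinely regular SRG of the predicted valency.'' Carrying the substitutions $\sigma = r-\mu_{2}$, $\phi = \mu_{1}-\mu_{2}$, $\psi = \nu-\zeta$, $\epsilon = \zeta r - \mu_{2}(\kappa-\psi)$ consistently through both directions, and pinning down the signs and shifts in the final expressions for $k',a,b$, is where the routine but delicate work lies.
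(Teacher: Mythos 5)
Your proposal is correct and follows essentially the same route as the paper: decompose $AA^{T}$ via the two-index structure, use Neumaier's identity (Lemma~\ref{le6.0}) to force $A_{1}A=\nu A+\zeta(J-A)$, whose $(x,b)$-entry is the stated count, then obtain $A_{1}J=\kappa J$ and the strongly regular relation by computing $A_{1}AA^{T}$ two ways, with the converse by direct substitution back into Neumaier's identity. Your explicit constants $\alpha'=(\mu_{1}-\mu_{2})\zeta+\mu_{2}k$ and $n'=(r-\mu_{2})+(\mu_{1}-\mu_{2})(\nu-\zeta)$ are in fact the internally consistent ones; the parenthetical formulas in the lemma statement omit the factor $\mu_{1}-\mu_{2}$.
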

It is interesting that condition (\ref{eq6.2}), when combined with (\ref{eq6.0}), leads to the strongly regular graph described by (\ref{eq6.4}). We can see that Lemma \ref{th6.0} describes a special class of block designs that have two indices. We now discuss a particular subclass of these block designs.
\par
A tactical configuration $(V,\mathcal{B})$ with the two indices $\mu_{1}$ and $\mu_{2}$ such that $\mu_{1}-\mu_{2}=1$ is called a $2$-{\it adesign}. Adesigns were were recently introduced in \cite{CUN}, and reported on in \cite{ADF} and \cite{MI}, where several constructions are given, and codes generated by the incidence matrices are computed.
\begin{theorem}\label{co6.0} A $2$-$(v,k,\lambda)$ adesign with incidence matrix $A$ is partial geometric with parameters $(v,k,r;\alpha',\beta')$ if and only if there are integers $\nu$ and $\zeta$ such that for each pair $(x,b)\in V\times\mathcal{B}$,\[  |\{y\in b\mid y\neq x,r_{xy}=\mu_{1}\}|=\begin{cases} \nu\quad (=n'+\alpha'-r+\lambda), \text{ if } x\in b,\\
                                                      \zeta\quad (=\alpha'-\lambda k), \text{ otherwise},\end{cases}\] and $A_{1}$ is the
adjacency matrix of a strongly regular graph with parameters $(v,k',a,b)$ where $k'=\epsilon-\psi\sigma,a=\epsilon+\psi-\sigma$ and $b=\epsilon$. Moreover, the following relations hold: $\sigma=r-\lambda,\psi=n'-r+\lambda(k+1),\epsilon=\lambda^{2}v+\lambda(k+r-2kr+\beta'-\alpha'-1)+\alpha'r$ and $k'=(k-1)r+\lambda(1-v)$.
\end{theorem}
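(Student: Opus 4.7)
The plan is to derive this theorem as a direct specialization of Lemma \ref{th6.0}. A $2$-adesign is by definition a tactical configuration with exactly two indices $\mu_{1},\mu_{2}$ satisfying $\mu_{1}-\mu_{2}=1$; identifying $\mu_{2}$ with $\lambda$ (so $\phi=\mu_{1}-\mu_{2}=1$), the iff statement of Lemma \ref{th6.0} transfers verbatim, yielding the required equivalence in terms of the integers $\nu,\zeta$ and a strongly regular graph with adjacency matrix $A_{1}$. Thus no new structural argument is needed; the proof reduces to unwinding the parameter formulas in the $\phi=1$ case.

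Next I would check the parameter formulas for the strongly regular graph. Setting $\phi=1$ in the expressions $k'=(\epsilon-\psi\sigma)/\phi$, $a=(\epsilon+\psi\phi-\sigma)/\phi$, $b=\epsilon/\phi$ from Lemma \ref{th6.0} immediately gives $k'=\epsilon-\psi\sigma$, $a=\epsilon+\psi-\sigma$, $b=\epsilon$, as stated. The closed form $k'=(k-1)r+\lambda(1-v)$ follows by substituting $\mu_{1}-\mu_{2}=1$ and $\mu_{2}=\lambda$ into the general identity $k'=((k-1)r+\mu_{2}(1-v))/(\mu_{1}-\mu_{2})$ recorded in Lemma \ref{th6.0}.

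The remaining identities are routine substitutions. The relation $\sigma=r-\mu_{2}$ gives $\sigma=r-\lambda$ on the nose. From $\psi=\nu-\zeta$ with $\nu=n'+\alpha'-r+\lambda$ and $\zeta=\alpha'-\lambda k$ (the specializations of the two expressions appearing in condition (\ref{eq6.2})), a single subtraction yields $\psi=n'-r+\lambda(k+1)$. Finally, I would plug $\zeta,\psi$, and $\kappa=k'=(k-1)r+\lambda(1-v)$ into the defining relation $\epsilon=\zeta r-\lambda(\kappa-\psi)$ from the derivation of Lemma \ref{th6.0} and then eliminate $n'$ using $n'=r+k+\beta'-\alpha'-1$ from Lemma \ref{le6.0}, collecting terms by powers of $\lambda$ to arrive at the asserted polynomial expression $\epsilon=\lambda^{2}v+\lambda(k+r-2kr+\beta'-\alpha'-1)+\alpha'r$.

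The entire proof is algebraic bookkeeping, and no single step is conceptually deep. The only place where one must be careful is this last calculation for $\epsilon$, since three auxiliary quantities ($n'$, $\kappa$, and $\psi$) have to be eliminated consistently and the coefficients of $\lambda^{2}$, $\lambda$, and $1$ tracked correctly; that is the only place where an arithmetic slip could realistically occur.
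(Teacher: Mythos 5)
Your proposal matches the paper's approach exactly: the paper gives no separate proof of this theorem, treating it as the immediate specialization of Lemma \ref{th6.0} to a $2$-adesign, i.e.\ setting $\phi=\mu_{1}-\mu_{2}=1$ and $\mu_{2}=\lambda$ and unwinding the formulas for $\sigma,\psi,\zeta,\kappa,\epsilon$, which is precisely what you describe. One caution at the step you yourself flagged as slip-prone: carrying out the substitution $\epsilon=\zeta r-\lambda(\kappa-\psi)$ with $\kappa=(k-1)r+\lambda(1-v)$, $\psi=n'-r+\lambda(k+1)$, $\zeta=\alpha'-\lambda k$ and $n'=r+k+\beta'-\alpha'-1$ actually yields $\epsilon=\lambda^{2}(v+k)+\lambda(k+r-2kr+\beta'-\alpha'-1)+\alpha'r$, whose $\lambda^{2}$-coefficient is $v+k$ rather than the $v$ printed in the statement, so you will not literally ``arrive at the asserted polynomial expression''; this appears to be a typo in the theorem (there is a similar sign issue in the paper's own $k'=(\epsilon-\psi\sigma)/\phi$ versus the derivation of (\ref{eq6.4})), not a defect in your method, but a complete write-up should record the corrected coefficient.
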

We can see that Theorem \ref{co6.0} describes a special class of $2$-adesigns. There seem to be even fewer examples of these, and the few examples we can find have long since been discovered.
\begin{example} Let $(V,\mathcal{B})$ be a quasi-symmetric design with intersection numbers $s_{1}$ and $s_{2}$ such that $s_{2}-s_{1}=1$. Several families of such quasi-symmetric designs are known to exist {\rm \cite{PAW}}. It is well-known that the dual of any balanced incomplete block design is a partial geometric design {\rm \cite{OLMEZ}}. Then the dual $(V,\mathcal{B})^{\perp}$ of $(V,\mathcal{B})$ is a partial geometric $2$-adesign.
\end{example}
\begin{example} Let $p$ be an odd prime. Let $D_{i}^{p+1}$ denote the $i$th cyclotomic class or order $p+1$ in $\mathbb{F}_{p^{2}}$. It was shown in {\rm \cite{KN}} that $(\mathbb{F}_{p^{2}},Dev(D_{i}^{p+1}))$ is a partial geometric design. It is easy to see that $(\mathbb{F}_{p^{2}},Dev(D_{i}^{p+1}))$ has the two indices $\mu_{1}=1$ and $\mu_{2}=0$ (see {\rm \cite{NOW}}). Then $(\mathbb{F}_{p^{2}},Dev(D_{i}^{p+1}))$ is a symmetric partial geometric $2$-adesign.
\end{example}
\begin{example} Let $C$ be the partial geometric difference set from Theorem \ref{th3.2} in the Abelian group $A\times B$ of order $n^{2}$. Then $(A\times B,Dev(C))$ is a partial geometric design, and it was shown in {\rm \cite{ACH}} that $(A\times B,Dev(C))$ has the two indices $\mu_{1}=1$ and $\mu_{2}=0$. Then $(A\times B,Dev(C))$ is a symmetric partial geometric $2$-adesign.
\end{example}
\section{Concluding Remarks}\label{sec6}
We have constructed several families of partial geometric difference sets and partial geometric difference families which are recorded in Table \ref{ta4.0} and Table \ref{ta4.1} respectively. These families have new parameters and so give directed strongly regular graphs with new parameters. We discussed some links between partially balanced designs, $2$-adesigns, and partial geometric designs and made an investigation into when a $2$-adesign is partial geometric. The condition noted in Lemma \ref{th6.0} seems surprisingly strong, and describes a special class of partial geometric designs that correspond (via (\ref{eq6.4})) to strongly regular graphs. The condition noted in Theorem \ref{co6.0} is also strong and describes a special class of $2$-adesigns. We wonder whether Lemma \ref{th6.0} gives an indirect but viable way of searching for strongly regular graphs with new parameters.
\bibliographystyle{plain}
\bibliography{myref4}

\begin{thebibliography}{10}

\bibitem{ACH}
K.T. Arasu, C.~Ding, T.~Helleseth, P.V. Kumar, and H.M. Martinsen.
\newblock Almost difference sets and their sequences with optimal
  autocorrelation.
\newblock {\em IEEE Trans. Inform. Theory}, 47:2934--2943, 2001.

\bibitem{BAUM}
L.D. Baumert and W.H. Hills.
\newblock Uniform cyclotomy.
\newblock {\em Journal of Number Theory}, 14:67--82, 1982.

\bibitem{BOSE}
R.C. Bose.
\newblock Strongly regular graphs, partial geometries, and partially balanced
  deisgns.
\newblock {\em Pacific Journal of Mathematics}, 13:389--419, 1963.

\bibitem{BOS}
A.E. Brouwer, O.~Olmez, and S.Y. Song.
\newblock Directed strongly regular graphs from $1\frac{1}{2}$-designs.
\newblock {\em European Journal of Combinatorics}, 33(6):1174--1177, 2012.

\bibitem{CUN}
C.~Ding.
\newblock {\em Codes from Difference Sets}, page~75.
\newblock World Scientific, 2015.

\bibitem{ADF}
C.~Ding and J.~Yin.
\newblock Constructions of almost difference families.
\newblock {\em Discrete Mathematics}, (308):4941--4954, 2008.

\bibitem{MI}
J.~Michel and B.~Ding.
\newblock A generalization of combinatorial designs related to almost
  difference sets.
\newblock {\em ArXiv e-prints}, 2015.

\bibitem{NEUM}
A.~Neumaier.
\newblock $t\frac{1}{2}$-designs.
\newblock {\em Journal of Combinatorial Theory, (A)}, 78:226--248, 1980.

\bibitem{NOW}
K.~Nowak.
\newblock A survey on almost difference sets.
\newblock {\em arXiv:1409.0114v1}, 2014.

\bibitem{KN}
K.~Nowak, O.~Olmez, and S.Y. Song.
\newblock Partial geometric difference families.
\newblock {\em Journal of Combinatorial Designs}, 2014.

\bibitem{OLMEZ}
O.~Olmez.
\newblock {\em On Highly Regular Digraphs}.
\newblock PhD thesis, Iowa State University, 2012.

\bibitem{O}
O.~Olmez.
\newblock Symmetric $1\frac{1}{2}$-deisgns and $1\frac{1}{2}$-difference sets.
\newblock {\em Journal of Combinatorial Designs}, 22(6):252--268, 2013.

\bibitem{O2}
O.~Olmez.
\newblock Plateaued functions and one-and-half difference sets.
\newblock {\em Designs, Codes and Cryptography}, pages 1--13, 2014.

\bibitem{PAW}
R.M. Pawale.
\newblock Quasi-symmetric designs with fixed difference of block intersection
  numbers.
\newblock {\em Journal of Combinatorial Designs}, 15(1):49--60, 2007.

\end{thebibliography}

\end{document}